\documentclass[11 pt]{amsart}

\usepackage[english]{babel}
\usepackage{amssymb} 
\usepackage{amsfonts} 
\usepackage{amsmath}
\usepackage{amsthm} 
\usepackage{epsfig} 
\usepackage{color}
\usepackage{pinlabel}
\usepackage{amscd}
\usepackage[all]{xy}
\usepackage{hyperref}
\usepackage{float}
\usepackage{graphicx}


\newtheorem{lemma}{Lemma}[section]
\newtheorem{thm}[lemma]{Theorem}
\newtheorem{prop}[lemma]{Proposition}
\newtheorem{cor}[lemma]{Corollary}

\newtheorem{fact}[lemma]{Fact}

\theoremstyle{definition}
\newtheorem{defn}[lemma]{Definition}
\newtheorem{quest}[lemma]{Question}

\theoremstyle{remark}
\newtheorem{rem}[lemma]{Remark} 


\newcommand\setItemnumber[1]{\setcounter{enumi}{\numexpr#1-1\relax}}

\newcommand{\Iso}{{\mathrm{Isom}}}
\newcommand{\Vol} {\ensuremath {{\mathrm{Vol}}}}

\newcommand{\matN}{\ensuremath {\mathbb{N}}}
\newcommand{\matR} {\ensuremath {\mathbb{R}}}
\newcommand{\matQ} {\ensuremath {\mathbb{Q}}}
\newcommand{\matZ} {\ensuremath {\mathbb{Z}}}

\newcommand{\matH} {\ensuremath {\mathbb{H}}}

\newcommand{\calT} {\ensuremath {\mathcal{T}}}


\usepackage[letterpaper,left=0.9in,right=0.9in,top=1.1in,bottom=1.1in]{geometry}
\vbadness 10000
\hbadness 10000


\DeclareGraphicsRule{.pdftex}{pdf}{.pdftex}{}


\author{Alexander Kolpakov}
\address{Institut de math\'ematiques, Rue Emile-Argand 11, 2000 Neuch\^atel, Switzerland}
\email{kolpakov (dot) alexander (at) gmail (dot) com}

\author{Stefano Riolo}
\address{Institut de math\'ematiques, Rue Emile-Argand 11, 2000 Neuch\^atel, Switzerland}
\email{stefano (dot) riolo (at) unine (dot) ch}

\title[Counting cusped hyperbolic 3--manifolds that bound geometrically]{Counting cusped hyperbolic 3--manifolds \\ that bound geometrically}


\begin{document}

\begin{abstract}
We show that the number of isometry classes of cusped hyperbolic $3$-manifolds that bound geometrically grows at least super-exponentially with their volume, both in the arithmetic and non-arithmetic settings.\\

\noindent
\textit{Key words: } $3$-manifold, $4$-manifold, hyperbolic geometry, cobordism, geometric boundary.\\

\noindent
\textit{2010 AMS Classification: } 57R90, 57M50, 20F55, 37F20. \\
\end{abstract}

\maketitle

\section{Introduction}

A \emph{hyperbolic manifold} is a manifold endowed with a Riemannian metric of constant sectional curvature $-1$. Throughout the paper, hyperbolic ma\-ni\-folds are assumed to be connected, orientable, complete, and of finite volume, unless otherwise stated. We refer to \cite{MaRe, VS} for the definition of an arithmetic hyperbolic manifold.

In view of the classical Rokhlin's theorem \cite{GM}, stating that every closed orientable $3$-manifold bounds a compact orientable $4$-manifold, one may translate the question of bounding to the setting of hyperbolic geometry. Following Long and Reid \cite{LR, LR2}, we say that a hyperbolic $n$-manifold $M$ \emph{bounds geometrically} if it is isometric to $\partial W$, for a hyperbolic $(n+1)$-manifold $W$ with totally geodesic boundary. This class of hyperbolic manifolds has attracted significant interest, and for $n=3$ some progress has been recently done -- see \cite{KMT, KRR, MZ, M, S, S2}.

As follows from \cite{KRR, LR}, to bound geometrically is an extremely non-trivial property for a hyperbolic 3-manifold, both in the compact and non-compact setting, respectively (c.f. \cite[Remark 1.4]{KRR}). Despite this, in the present work we show that there are plenty of geometrically bounding cusped (i.e. non-compact) hyperbolic $3$-manifolds. More precisely, the goal of this paper is to show the following

\begin{thm}\label{thm:main}
There exist constants $C,K,v_0,s_0>0$ and a family $\{W_k\}_{k\in\matN}$ of hyperbolic $4$-ma\-ni\-folds with connected, non-compact, totally geodesic boundary $\partial W_k=M_k$ such that:
\begin{enumerate}
\item the $M_k$'s are pairwise non-homeomorphic,
\item the $M_k$'s are all arithmetic (resp. non-arithmetic),
\item the cardinality of $\{k\ |\ \Vol(M_k)\leq v\}$ is at least $v^{Cv}$ for all $v\geq v_0$, 
\item the $M_k$'s have no closed geodesics of length $<s_0$.
\end{enumerate}
Moreover:
\begin{enumerate}
\setItemnumber{5}
\item the doubles $D(W_k)$'s are pairwise non-homeomorphic,
\setItemnumber{6}
\item the $D(W_k)$'s are all arithmetic (resp. non-arithmetic),
\setItemnumber{7}
\item $\Vol(W_k)= K\cdot\Vol(M_k)$.
\end{enumerate}
\end{thm}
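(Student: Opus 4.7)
My plan is to start from a single ``seed'' arithmetic cusped hyperbolic $4$-manifold $W_0$ with connected, totally geodesic boundary $M_0$ -- for instance one built from the ideal right-angled $24$-cell in $\matH^4$ in the spirit of \cite{KMT} -- and upgrade it into a factorial-rate family $\{W_k\}$ by exploiting a cut-and-paste degree of freedom. Concretely, I would first pass to a finite cover $\widetilde W\to W_0$ containing a large number $N$ of marked, disjoint, totally geodesic hypersurfaces (coming from the reflection walls of a congruence cover), then cut along these walls and re-glue by an element of a large group of admissible combinatorial pairings. The number of such gluings grows at least factorially in $N$, and since both $\Vol(W_k)$ and $\Vol(M_k)$ scale linearly with $N$, converting $N$ into a volume via $v=cN$ produces the $v^{Cv}$ count asserted in (3).

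Conditions (2), (4), (6), (7) should come for free from this setup. Arithmeticity of the $M_k$'s and of the $D(W_k)$'s in (2) and (6) is preserved because every re-gluing happens inside the arithmetic commensurability class of $W_0$; the uniform ratio $\Vol(W_k)=K\Vol(M_k)$ in (7) is transparent because a single local model governs the construction. The systole lower bound (4) is obtained by taking $\widetilde W\to W_0$ deep inside a congruence tower of the seed lattice, since any closed geodesic of length below a threshold corresponds to finitely many conjugacy classes in $\pi_1(W_0)$, all of which are eventually killed. For the non-arithmetic variant of (2) and (6), I would replace $W_0$ by a Gromov–Piatetski-Shapiro interbreeding (cf.\ \cite{VS}) of two arithmetically non-commensurable seeds sharing a common totally geodesic hypersurface, and arrange the cut-and-paste so that every $W_k$ inherits material from both halves; this forces non-arithmeticity without affecting the count.

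The main obstacle -- and where I expect the real work to lie -- is (1) and (5): showing that super-exponentially many gluings actually yield pairwise non-homeomorphic $M_k$'s and $D(W_k)$'s. By Mostow–Prasad rigidity, any coincidence $W_k \cong W_{k'}$ forces an isomorphism of the combinatorial data up to an ambient self-isometry, so the number of homeomorphism classes is bounded below only by (number of gluings)$/|\Iso|$, and an unchecked symmetry group could swallow the factorial rate. To beat this I would attach to each $W_k$ a rigid combinatorial invariant -- e.g.\ the decorated dual graph of the $3$-dimensional cut locus together with the cusp-cross-section isometry types at its vertices -- which is visibly preserved by every self-homeomorphism. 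Designing the seed so that this invariant separates at least $v^{Cv}$ of the gluings (and so that the same invariant applied to $\partial W_k$ rather than $W_k$ still separates them, giving (1) from (5)) is the crux of the argument; this is where the choice of seed with large boundary cusp count and asymmetric gluing pattern becomes essential.
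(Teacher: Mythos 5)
The central gap is that your cut-and-paste scheme operates in dimension four, while the theorem is primarily about the boundary $3$-manifolds $M_k$. If the reflection walls you cut along are disjoint from $\partial\widetilde W$, then every regluing leaves the boundary untouched, all the $M_k$ coincide, and (1) and (3) fail trivially. If instead the walls meet $\partial\widetilde W$, then they intersect it in totally geodesic surfaces, and the regluing must simultaneously and consistently reglue those surfaces; for a ``large group of admissible pairings'' to act one needs all the wall pieces together with their boundary traces to be pairwise isometric, a strong constraint that a generic congruence cover will not satisfy and that you never verify. You also do not address why the reglued boundary stays connected, nor where the orientation-reversing fixed-point-free involution comes from that Martelli's bounding argument requires in order to collapse the many boundary components of the intermediate $4$-manifold down to one. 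The paper avoids all of this by going in the opposite order: it builds the $3$-manifolds $M_G$ first, as gluings of copies of a carefully designed block along a $4$-regular $1$-factored graph $G$ (with the involution engineered in via the antipodal map on the cubical glueing graph), and only then erects $W_G$ above $M_G$; this ordering makes boundary control automatic and yields (7) on the nose.

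Two further issues. For the non-arithmetic variant, a Gromov--Piatetski-Shapiro hybrid $W_0$ need not have non-arithmetic boundary: GPS hybrids contain arithmetic totally geodesic hypersurfaces (the hybridization wall itself is arithmetic), so non-arithmeticity of $W_k$ gives no information about $M_k$. The paper instead exhibits a specific non-compact right-angled $4$-polytope $P^4$ with a right-angled facet $P^3$ whose reflection groups are \emph{both} non-arithmetic (Theorem~\ref{thm:P}), and repeats the block-and-graph construction inside $P^3\subset P^4$, so non-arithmeticity of $M_G$ comes directly from the covering $M_G\to P^3\to Q^3$. Finally, you correctly flag the distinguishing step for (1) and (5) as the crux but leave it unresolved; your proposed invariant (decorated dual graph of the cut locus) depends on the chosen cut system, and you would still need to show that any homeomorphism carries one cut system to the other. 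The paper supplies exactly this: in the arithmetic case the cut decomposition is recovered canonically from the Epstein--Penner decomposition, because the blocks are glued from ideal octahedra and the cusp cross-sections detect the tessellation (Proposition~\ref{prop:distinguish_a}); in the non-arithmetic case one instead invokes Margulis' theorem on the discreteness of the commensurator to bound the number of conjugates of $\pi_1(M_G)$ inside the unique maximal lattice linearly in volume (Proposition~\ref{prop:distinguish_na}). One of these mechanisms, or a substitute for it, must be supplied before your scheme closes.
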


Here, the \emph{double} of a hyperbolic manifold $W$ with totally geodesic boundary is the hyperbolic manifold $D(W)=\left(W\sqcup W\right)/_{\partial W}$ obtained by ``reflecting'' $W$ in its boundary $\partial W$.

By combining Theorem~\ref{thm:main} (1)-(4) with the work of Belolipetsky, Gelander, Lubotzky and Shalev \cite{BGLS} and Burger, Gelander, Lubotzky and Mozes \cite{BGLM} on the growth of the number of isometry classes of hyperbolic manifolds, we obtain 

\begin{cor} \label{cor:main_3d}
The number of non-compact, geometrically bounding \emph{arithmetic} hyperbolic $3$-ma\-ni\-folds with volume $\leq v$ has the same growth type as the number of all arithmetic hyperbolic 3-manifolds with volume $\leq v$.

Similarly, the number of non-compact, geometrically bounding \emph{non-arithmetic} hyperbolic $3$-ma\-ni\-folds with volume $\leq v$ and no closed geodesic of length $< s_0$ has the same growth type as the number of all hyperbolic 3-manifolds with volume $\leq v$ and no closed geodesic of length $<s_0$.
\end{cor}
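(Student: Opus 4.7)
The plan is a simple sandwich argument. On one side, Theorem~\ref{thm:main} provides a super-exponential lower bound $v^{Cv}$ on the counting function for cusped, geometrically bounding $3$-manifolds. On the other side, \cite{BGLS} and \cite{BGLM} give a matching super-exponential upper bound $v^{cv}$ on the counting function for \emph{all} arithmetic hyperbolic $3$-manifolds (respectively, for all hyperbolic $3$-manifolds with systole bounded below by $s_0$). Since every geometrically bounding manifold is \emph{a fortiori} a hyperbolic manifold, the two counting functions will be trapped between functions of the same form $v^{\Theta(v)}$, and hence share the same growth type.

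In more detail, I would first fix notation. Let $\mathcal{B}(v)$ denote the number of isometry classes of cusped, geometrically bounding, arithmetic hyperbolic $3$-manifolds of volume at most $v$, and let $\mathcal{A}(v)$ be the analogous count for \emph{all} arithmetic hyperbolic $3$-manifolds. By Theorem~\ref{thm:main} parts (1)--(3), together with Mostow rigidity (which promotes ``non-homeomorphic'' to ``non-isometric'' for the family $\{M_k\}$), I obtain $\mathcal{B}(v) \geq v^{Cv}$ for $v \geq v_0$. From \cite{BGLS} I get $\mathcal{A}(v) \leq v^{cv}$ for some constant $c$ and all sufficiently large $v$. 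The trivial inclusion $\mathcal{B}(v) \leq \mathcal{A}(v)$ then sandwiches both $\mathcal{B}$ and $\mathcal{A}$ between $v^{Cv}$ and $v^{cv}$, and a direct check confirms that these two functions are of the same growth type (in the sense of mutual bounds after rescaling the argument).

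The non-arithmetic case will proceed identically, using Theorem~\ref{thm:main}(4) to produce non-arithmetic, cusped, geometrically bounding $3$-manifolds with systole bounded below by $s_0$, and invoking \cite{BGLM} for the matching upper bound on the total count of hyperbolic $3$-manifolds of systole $\geq s_0$. The systole condition is essential here: without it the ambient counting function for hyperbolic $3$-manifolds fails even to be finite, due to Thurston--Jorgensen Dehn filling phenomena.

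I do not foresee a serious obstacle, as the argument is purely a comparison of asymptotics. The only minor subtlety will be to verify that the upper bounds cited from \cite{BGLS, BGLM} apply in precisely the form $v^{\Theta(v)}$ to cusped (rather than closed) hyperbolic $3$-manifolds, and to pin down the working notion of ``growth type'' so that $v^{Cv}$ and $v^{cv}$ are indeed equivalent under it. These are bookkeeping matters rather than genuine difficulties.
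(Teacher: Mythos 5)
Your argument is correct and is essentially the same sandwich argument the paper uses: Theorem~\ref{thm:main}(1)--(4) gives the super-exponential lower bound $v^{Cv}$ on the geometrically bounding count, while \cite{BGLS, BGLM} give an upper bound of growth type $v\mapsto v^v$ on the ambient count (valid for non-compact manifolds, and, in the non-arithmetic case, with the systole floor $s_0$), and one then checks that $v^{Cv}$ and $v^{cv}$ have the same growth type under the paper's definition. One minor slip: passing from ``pairwise non-homeomorphic'' to ``pairwise non-isometric'' does not require Mostow rigidity --- an isometry is in particular a homeomorphism, so non-homeomorphic trivially implies non-isometric; rigidity would be needed only for the converse implication, which is not what you use here.
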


Here, we say that two non-decreasing functions $f,g\colon\matR\to\matR$ have \emph{same growth type} if there exists a constant $C>1$ such that, for all $x$ big enough,

\begin{equation*}
f(x/C)\leq g(x)\leq f(Cx).
\end{equation*}

Indeed, the works \cite{BGLS, BGLM} show that the number of arithmetic hyperbolic $n$-manifolds, $n\geq2$, with volume $\leq v$ has growth type of $v\mapsto v^v$, both in the compact and non-compact case. An analogous statement holds for non-arithmetic hyperbolic $n$-manifolds, $n\geq3$, with an extra lower bound on the length of closed geodesics if $n=3$ (otherwise, by hyperbolic Dehn filling, there are infinitely many $3$-manifolds of volume $\leq v$, provided $v$ is big enough).

Combining the pioneering result by Long and Reid \cite{LR2} with the recent work \cite{KRS} by Reid, Slavich and the first author, many arithmetic hyperbolic manifolds (in arbitrary dimension, compact or non-compact) bound geometrically. The proof there relies on arithmetic techniques and does not provide any counting results on the number of such manifolds with bounded volume. On the other hand, we did not find in the literature any example of a non-arithmetic hyperbolic $3$-manifold which bounds geometrically. In this paper, we provide the first such examples.

Note that Theorem \ref{thm:main} has also some consequences in dimension four. Indeed, by adding also (5)-(7) to \cite{BGLS, BGLM}, we have:

\begin{cor} \label{cor:main_4d}
The number of non-compact, arithmetic (resp. non-arithmetic) hyperbolic $4$-manifolds with volume $\leq v$ and with a connected separating totally geodesic hypersurface  has the same growth type as that of all hyperbolic $4$-manifolds with volume $\leq v$.
\end{cor}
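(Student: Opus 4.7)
The plan is to deduce Corollary \ref{cor:main_4d} from Theorem \ref{thm:main}(5)--(7) together with the growth results of \cite{BGLS, BGLM} in dimension four, in complete parallel with how Corollary \ref{cor:main_3d} is deduced from parts (1)--(4).

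First, the upper bound on the counting function is immediate: the class of non-compact hyperbolic $4$-manifolds with volume $\leq v$ that carry a connected separating totally geodesic hypersurface is a sub-class of the class of all hyperbolic $4$-manifolds with volume $\leq v$, and by \cite{BGLS, BGLM} the latter has growth type $v\mapsto v^v$ in both the arithmetic and non-arithmetic settings (in dimension $n\geq 4$, no systole lower bound is needed).

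For the lower bound, I would use the family $\{D(W_k)\}_{k\in\matN}$ supplied by Theorem \ref{thm:main}. Each $D(W_k)$ is a complete hyperbolic $4$-manifold because $W_k$ has totally geodesic boundary; it is non-compact because $M_k$ is non-compact; and by construction it contains the isometric image of $\partial W_k=M_k$ as a connected separating totally geodesic hypersurface. By items (5) and (6) these $4$-manifolds are pairwise non-homeomorphic and all arithmetic (resp. non-arithmetic). From item (7) one gets
\[
\Vol(D(W_k))=2\Vol(W_k)=2K\cdot\Vol(M_k),
\]
so, using item (3),
\[
|\{k : \Vol(D(W_k))\leq v\}|=|\{k : \Vol(M_k)\leq v/(2K)\}|\geq \bigl(v/(2K)\bigr)^{C v/(2K)}
\]
for all $v\geq 2Kv_0$, which has the $v^v$ growth type. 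Combining this with the upper bound yields the claim.

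No substantial obstacle is expected at this stage: the heavy lifting has already been done inside Theorem \ref{thm:main}, and the corollary reduces to a rescaling of the volume by the factor $2K$ and an appeal to the growth results of \cite{BGLS, BGLM}.
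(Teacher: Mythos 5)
Your proposal is correct and takes essentially the same route as the paper, which simply states that Corollary~\ref{cor:main_4d} follows by combining Theorem~\ref{thm:main}(5)--(7) with the $v^v$ growth results of \cite{BGLS, BGLM} for hyperbolic $4$-manifolds; you have merely filled in the (straightforward) details — that each $D(W_k)$ is non-compact, finite-volume, contains $M_k$ as a connected separating totally geodesic hypersurface, and that $\Vol(D(W_k))=2K\Vol(M_k)$ converts item~(3) into the required lower bound.
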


What Theorem \ref{thm:main} actually shows is stronger. Roughly speaking, the statement is: ``Many cusped arithmetic (resp. non-arithmetic) hyperbolic $4$-manifolds are doubles of manifolds with arithmetic (resp. non-arithmetic) connected boundary''. Let us formulate it more precisely in terms of lattices. 

Recall that the orientation-preserving isometry group of the hyperbolic space $\matH^n$ can be identified with $\mathrm{PSO}(n,1)$, and that a \emph{lattice} (resp. \emph{uniform lattice}) is a discrete subgroup $\Gamma<\mathrm{PSO}(n,1)$ such that the quotient $\matH^n/_\Gamma$ has finite volume (resp. is compact). 

\begin{cor} \label{cor:main_4d_bis}
Let $\delta(v)$ be the number of conjugacy classes of torsion-free, non-uniform, arithmetic (resp. non-arithmetic) lattices $\Gamma<\mathrm{PSO}(4,1)$ with co-volume $\leq v$ such that 
\begin{equation*}
\Gamma\cong\Lambda\ast_\Delta\Lambda \mbox{ (amalgamated  free product),} 
\end{equation*}
for some non-uniform, arithmetic (resp. non-arithmetic) lattice $\Delta<\mathrm{PSO}(3,1)$.
Then, $\delta(v)$ has the same growth type as the number of conjugacy classes of all lattices in $\mathrm{PSO}(4,1)$ with co-volume $\leq v$.
\end{cor}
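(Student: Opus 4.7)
The plan is to translate Theorem~\ref{thm:main} into the lattice-theoretic setting via Mostow rigidity and Van Kampen's theorem, and then to invoke \cite{BGLS, BGLM} to identify the growth type.

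Given the family $\{W_k\}$ of Theorem~\ref{thm:main}, set $\Gamma_k = \pi_1(D(W_k))$, $\Lambda_k = \pi_1(W_k)$, and $\Delta_k = \pi_1(M_k)$. Each $D(W_k)$ is an orientable, cusped, finite-volume hyperbolic $4$-manifold, so $\Gamma_k$ is a torsion-free, non-uniform lattice in $\mathrm{PSO}(4,1)$, arithmetic (resp.\ non-arithmetic) by~(6); similarly $\Delta_k$ is a torsion-free, non-uniform, arithmetic (resp.\ non-arithmetic) lattice in $\mathrm{PSO}(3,1)$ by~(2). Because $\partial W_k = M_k$ is connected, $D(W_k)$ decomposes as $W_k\cup_{M_k}W_k$ and Van Kampen's theorem gives
\[
\Gamma_k \;\cong\; \Lambda_k *_{\Delta_k} \Lambda_k,
\]
which is the structural condition appearing in the corollary. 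The $\pi_1$-injectivity of $M_k\hookrightarrow W_k$ needed for the pushout to have this form is automatic, since $M_k$ is a totally geodesic (hence $\pi_1$-incompressible) hypersurface of the hyperbolic manifold $W_k$.

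For the counting I would invoke Mostow rigidity to identify conjugacy classes of torsion-free lattices in $\mathrm{PSO}(4,1)$ with isometry classes of oriented, finite-volume hyperbolic $4$-manifolds. Then (5) guarantees that the $\Gamma_k$'s are pairwise non-conjugate, and combining (3) with the volume relation $\mathrm{covol}(\Gamma_k) = 2\Vol(W_k) = 2K\Vol(M_k)$ from (7) produces a lower bound of the form $\delta(v) \geq v^{cv}$ for some $c>0$ and all $v$ large enough. The matching upper bound is immediate from \cite{BGLS, BGLM}: in dimension four, the number of conjugacy classes of all arithmetic (resp.\ non-arithmetic) lattices in $\mathrm{PSO}(4,1)$ with co-volume $\leq v$ already grows like $v^{O(v)}$, with no closed-geodesic lower bound required for $n=4$.

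The whole deduction is essentially a dictionary translation, and the only substantive obstacle is Theorem~\ref{thm:main} itself. The one point that deserves a line of verification is that the geometric decomposition $D(W_k) = W_k \cup_{M_k} W_k$ realises the algebraic amalgamated free product correctly -- i.e.\ that the two copies of $\Lambda_k$ in the Van Kampen pushout are exchanged by the isometric involution of $D(W_k)$ that fixes $\Delta_k$ -- but this is standard for doubles with connected boundary.
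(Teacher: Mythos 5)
Your proposal is correct and follows essentially the same route the paper intends: the paper does not write out a separate proof of this corollary, treating it as the translation of Theorem~\ref{thm:main}~(5)--(7) into lattice language via Van Kampen's theorem and Mostow rigidity together with the growth estimates of \cite{BGLS, BGLM}, which is exactly what you spell out. The only cosmetic imprecision is in the last paragraph on the upper bound: what is needed there is simply that $\delta(v)$ is bounded above by the count of \emph{all} conjugacy classes of lattices of covolume $\leq v$ (a trivial inclusion), and that the latter has growth type $v^v$, rather than separately invoking the arithmetic and non-arithmetic counts.
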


We do not know if the non-arithmetic $D(W_k)$'s of Theorem \ref{thm:main} are commensurable with Gromov -- Piatetsky-Shapiro manifolds \cite{GPS}. However, we do not see any obstruction to this, c.f. \cite[Section 6.2]{FLMS} and Fact \ref{fact:gps} in Section \ref{sec:main_na}.

The proof of Theorem \ref{thm:main} follows by an explicit construction. In both cases, arithmetic and not, the manifolds $M_k$ cover a certain right-angled polyhedron, whose geometric and combinatorial properties help us establishing the necessary facts. 

In the arithmetic case, c.f. Section \ref{sec:main_a}, our construction is an application of a theorem by Martelli \cite{M}, which shows that hyperbolic $3$-manifolds tessellated by right-angled octahedra can be realised as totally geodesic hypersurfaces in hyperbolic $4$-manifolds. The key observation is that the right-angled octahedron is a facet of the right-angled $24$-cell. Some more work is necessary to arrange so that many ``octahedral manifolds'' also bound geometrically, having a fixed-point-free orientation-reversing isometric involution.  We assemble copies of a $4$-dimensional building block (tessellated by right-angled $24$-cells) as prescribed by certain glueing graphs. There exists a one-to-one correspondence between the isometry classes of these manifolds and the isomorphism classes of their glueing graphs. The number of such graphs grows super-exponentially in the number of vertices, and this essentially shows Theorem~\ref{thm:main} (3).

The proof of Theorem \ref{thm:main} in the non-arithmetic case, c.f. Section \ref{sec:main_na}, is similar, but more involved. The main ingredient is the following
\begin{thm}[Proposition \ref{prop:right-angled-na}] \label{thm:P}
There exists a non-compact finite-volume right-angled $4$-polytope $P^4\subset\matH^4$ with a facet $P^3$ such that their respective reflection groups $\Gamma_{P^4}<\mathrm{PO}(4,1)$ and $\Gamma_{P^3}<\mathrm{PO}(3,1)$ are non-arithmetic lattices.
\end{thm}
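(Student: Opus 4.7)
The plan is to apply a Gromov--Piatetsky-Shapiro-style hybridisation to pieces of the ideal right-angled $24$-cell $R \subset \matH^4$, whose reflection group is the standard non-cocompact arithmetic right-angled lattice in $\mathrm{PO}(4,1)$. First I would choose a hyperplane $H \subset \matH^4$ that is fixed by an involutive isometry of $R$ and arranged to meet every facet of $R$ it crosses orthogonally; cutting $R$ along $H$ then gives two right-angled halves $R^+, R^-$ meeting along a $3$-dimensional right-angled ideal polytope $F = R \cap H$. Next I would build $P^4$ by glueing two (or more) copies of $R^+$ along $F$ via an isometry $\sigma$ of $F$ that does not extend to an isometry of $R$: right-angledness is preserved, because at each internal $F$-facet two dihedrals $\pi/2$ combine to $\pi$, making $F$ interior and leaving only original $R$-facets as the boundary facets of $P^4$, all still meeting at right angles.

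For the designated $3$-dimensional facet, I would take $P^3$ to be any boundary facet of $P^4$ whose ambient hyperplane is met transversely by $H$. The same cut then gives a decomposition $P^3 = Q^+ \cup_{F'} Q^-$ with $Q^{\pm}$ right-angled and $F' = P^3 \cap H$ a $2$-dimensional right-angled polygon, realising $P^3$ as the analogous hybrid one dimension lower.

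For non-arithmeticity of both $\Gamma_{P^4}$ and $\Gamma_{P^3}$ I would invoke Margulis's commensurator criterion. The interior glueing hyperplane is canonical: it can be recovered from $\Gamma_{P^4}$ as the unique totally geodesic hyperplane along which the reflection subgroup has the distinguished structure coming from the hybridisation. Consequently every element of the commensurator of $\Gamma_{P^4}$ in $\mathrm{PO}(4,1)$ preserves $H$ and restricts to an isometry of $F$ lying in a discrete subgroup of $\mathrm{Isom}(F)$. Choosing $\sigma$ generic with respect to the arithmetic structure on $F$ keeps this subgroup small, so the commensurator is not dense and by Margulis's theorem $\Gamma_{P^4}$ is non-arithmetic. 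Restricting exactly the same reasoning to $F'$ yields non-arithmeticity of $\Gamma_{P^3}$ in $\mathrm{PO}(3,1)$.

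The main obstacle is to exhibit a single explicit choice of $H$ and of the glueing isometry $\sigma$ such that, simultaneously, (i) the assembly closes up into a finite-volume right-angled polytope $P^4$ of the intended combinatorial type, (ii) a chosen facet $P^3$ of $P^4$ is itself cut by $H$ into a finite-volume right-angled hybrid of the same form in dimension three, and (iii) $\sigma$ is generic enough to defeat the commensurator criterion in both dimensions $3$ and $4$ at once. In practice this reduces to an explicit coordinate or Coxeter-diagrammatic description of $H$, $F$, and $\sigma$ within the standard model of the $24$-cell, which is where the hands-on work of the proof must lie.
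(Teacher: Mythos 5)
Your proposal takes a genuinely different route from the paper's, and it has a serious gap.

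The paper does not cut or hybridise the $24$-cell. Instead, it starts from an explicit non-arithmetic Coxeter pyramid $Q^4\subset\matH^4$ taken from Tumarkin's classification of hyperbolic Coxeter $n$-polytopes with $n+2$ facets (Figure~\ref{fig:q4-diagram}). This $Q^4$ has a facet $Q^3$ which is itself a non-arithmetic Coxeter pyramid in $\matH^3$. The right-angled polytope $P^4$ is then obtained simply by taking the union $\bigcup_{\gamma\in\Gamma_v}\gamma(Q^4)$, where $\Gamma_v$ is the (finite) spherical reflection subgroup stabilising a suitable finite vertex $v$ of $Q^4$: unfolding $Q^4$ around $v$ doubles every submultiple-of-$\pi$ dihedral angle until everything becomes a right angle. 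The facet $P^3$ is produced the same way one dimension down. Non-arithmeticity is then free: $\Gamma_{P^3}$ is a finite-index subgroup of the non-arithmetic $\Gamma_{Q^3}$, and $\Gamma_{P^3}$ embeds in $\Gamma_{P^4}$, so both are non-arithmetic. No hybridisation, no commensurator argument, no genericity.

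The critical gap in your proposal is the non-arithmeticity step. The Gromov--Piatetsky-Shapiro mechanism proves non-arithmeticity of a hybrid precisely because the two pieces come from \emph{incommensurable} arithmetic lattices (defined over different fields or quadratic forms); if the hybrid were arithmetic, the reflection in the cutting hypersurface would be in the commensurator and would force the two pieces to be commensurable, a contradiction. You glue two copies of the \emph{same} half $R^+$ of the \emph{same} arithmetic $24$-cell; there is only one arithmetic lattice in play, so the GPS dichotomy has nothing to bite on. Worse, the phrase ``choose $\sigma$ generic with respect to the arithmetic structure on $F$'' is vacuous here: $\sigma$ must be a self-isometry of the fixed finite-volume cross-section $F$, and $\Iso(F)$ is a \emph{finite} group, so there is no continuous family from which to pick a generic element. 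You would have to exhibit a concrete $H$ and a concrete $\sigma\in\Iso(F)$ and then \emph{prove} the resulting reflection group is non-arithmetic, and nothing in the proposal does this; indeed it is entirely plausible that for every $\sigma\in\Iso(F)$ the twisted double remains commensurable with $\Gamma_R$ (for instance whenever the reflection $\rho_H$ in the cutting hyperplane is a symmetry of the induced $R^+$-tessellation, the group generated by $\Gamma_{P^4}$ and $\rho_H$ is arithmetic and contains $\Gamma_{P^4}$ with finite index). The obstacle you flag at the end as ``where the hands-on work must lie'' is in fact the whole content of the theorem, and the paper avoids it entirely by drawing the non-arithmeticity from an already-classified Coxeter diagram rather than trying to manufacture it by an exotic gluing.
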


Then, we proceed in analogy to the arithmetic case, the only difference being in the way of distinguishing manifolds by distinguishing their glueing graphs. More precisely, in the arithmetic case we borrow the techniques of \cite{CFMP} and \cite{KMT} using the Epstein-Penner decomposition \cite{EP}, while in the non-arithmetic case we use a classical result by Margulis \cite{Margulis} on the discreteness of commensurator. For some more observations about the differences between the two cases c.f. Remarks \ref{rem:avoid_margulis} and \ref{rem:margulis_cpt} at the end of the paper.

It is known that hyperbolic right-angled, resp. Coxeter, polytopes of finite volume do not exist in dimensions greater than $12$ \cite{Dufour}, resp. $995$ \cite{Kh, Prokhorov}, so our technique cannot be extended to arbitrary dimensions, c.f. Remarks \ref{rem:surfaces} and \ref{rem:higher_dimension_PV}.
On the other hand,  an advantage of the present construction is that it allows us to prove the bound from Theorem~\ref{thm:main} (7) on the volume of the ``ambient'' $4$-manifold and, in particular, to show Corollaries \ref{cor:main_4d} and \ref{cor:main_4d_bis}. We have not found a simpler way to prove them.

We conclude the introduction by asking the following
\begin{quest} \label{quest:infinitely many}
Are there infinitely many geometrically bounding hyperbolic $3$-manifolds with uniformly bounded volume?
\end{quest}

\section*{Acknowledgements}
\noindent
{\small 
The authors were supported by the Swiss National Science Foundation, project no.~PP00P2-170560. They would like to thank Ruth Kellerhals (Universit\'e de Fribourg), Bruno Martelli (Universit\`a di Pisa), Bram Petri (HIM Bonn), Alan Reid (Rice University), and Leone Slavich (Universit\`a di Pisa) for their interest and fruitful discussions. The authors express special gratitude to the anonymous referee for her/his careful reading of the manuscript and for pointing out a minor mistake in the initial proofs of Lemmas \ref{lem:involution_Ba} and \ref{lem:involution_Bna}.
}

\section{The arithmetic case}\label{sec:main_a}

In this section we prove Theorem~\ref{thm:main} in the arithmetic setting. However, the techniques that we use will not require any specific arithmetic tools to be involved.

\subsection{The block} \label{sec:blocks_a}

Consider an ideal right-angled octahedron $O\subset\matH^3$, and colour its faces in black and white in the chequerboard fashion. By doubling $O$ along the white faces, we produce a hyperbolic $3$-manifold $B'$ with non-compact totally geodesic boundary, colloquially known as the ``Minsky block''.

In total, the manifold $B'$ has $4$ boundary components $D_1,\ldots,D_4$ and 6 annular cusps $C_{ij}$, $i<j\in\{1,\ldots,4\}$, with $C_{ij}\cap\partial B'\subset D_i\cup D_j$. The symmetry group of $B'$ acts transitively on each of the sets $\{D_i\}_i$ and $\{C_{ij}\}_{i<j}$, c.f. \cite{CFMP} or \cite[Proposition 2.3]{S}.

\begin{figure}[h]
\includegraphics[scale=.25]{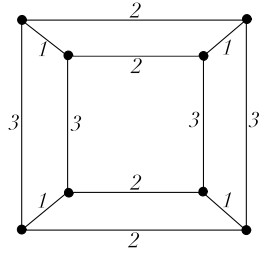}
\caption{\footnotesize This labelled graph will be employed in more than one construction in the present paper.}
\label{fig:cubical_graph}
\end{figure}

Consider now the cubical graph in Figure \ref{fig:cubical_graph}. Take a copy $B'_v$ of $B'$ for each vertex $v$ of the graph, and for each of its edges with label $i\in\{1,2,3\}$ identify the corresponding copies of $D_i$ through the map induced by the identity. One can easily check that the resulting manifold is orientable. 

\begin{defn} \label{def:block_a}
We call the hyperbolic $3$-manifold $B$ with totally geodesic boundary resulting from the above construction \textit{the block}.
\end{defn}

The following fact will be essential later.

\begin{lemma} \label{lem:involution_Ba}
The block $B$ has $8$ mutually isometric boundary components $C_1,\ldots,C_4$, $C'_1,\ldots,C'_4$, and admits an orientation-reversing fixed point free involution $\iota\in \mathrm{Isom}(B)$, such that $\iota(C_j)=C'_j$ for all $j\in\{1,2,3,4\}$.
\end{lemma}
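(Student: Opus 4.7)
The plan is to construct $\iota$ by combining a natural orientation-reversing involution $\sigma$ of the Minsky block $B'$ with the antipodal symmetry $\tau$ of the cubical glueing graph. First, since the cubical graph uses only labels $1,2,3$ and each vertex meets exactly one edge of each label, each copy $B'_v$ contributes exactly one free boundary component $D_4^v$ to $B$. I would split the 8 vertices of the cube into 4 antipodal pairs $\{v_j,v_j'\}_{j=1}^4$ and set $C_j := D_4^{v_j}$, $C_j' := D_4^{v_j'}$. All eight components are mutually isometric because they are all isometric copies of $D_4\subset B'$, and the symmetry group of $B'$ acts transitively on $\{D_i\}_i$.

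Next, I would take $\sigma\colon B'\to B'$ to be the canonical orientation-reversing involution that swaps the two ideal octahedra composing $B'$, fixing pointwise the white faces along which they are doubled. This $\sigma$ sends each $D_i$ to itself, swapping its two black-face halves (one from each octahedron). On the graph side, the antipodal map $\tau$ is a fixed-point-free, label-preserving involution of the cubical graph of Figure~\ref{fig:cubical_graph}. I would then define $\iota$ by declaring $\iota|_{B'_v} = \sigma\colon B'_v\to B'_{\tau(v)}$. Consistency across gluings is immediate: an edge of label $i$ joining $v,w$ identifies $D_i^v\sim D_i^w$ via the identity; $\tau$ sends this edge to the label-$i$ edge $(\tau(v),\tau(w))$, whose gluing is also by the identity; and $\sigma$ restricts to a single isometry of $D_i$, so the two restrictions of $\iota$ agree along the gluing surface. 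Orientation-reversal and $\iota^2=\mathrm{id}$ follow from the corresponding properties of $\sigma$ and $\tau$, and $\iota(C_j) = \sigma(D_4^{v_j}) = D_4^{v_j'} = C_j'$ by construction.

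The step I expect to be most delicate is fixed-point-freeness. A fixed point of $\iota$ would have to lie in $B'_v\cap B'_{\tau(v)}\subset B$, that is, on a gluing surface shared by these two copies. But antipodal vertices of the cube are never adjacent, so $B'_v$ and $B'_{\tau(v)}$ are disjoint in $B$, and $\iota$ has no fixed points. In particular, although $\sigma$ itself has fixed points inside each $B'$ (pointwise along the interior white faces), the non-trivial relabeling by $\tau$ moves them off. The only combinatorial hypothesis really being used here is that $\tau$ is a fixed-point-free label-preserving automorphism of the cube with no fixed edge, which one simply reads off from Figure~\ref{fig:cubical_graph}.
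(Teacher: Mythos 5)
Your construction is well-posed (the consistency across gluings and the fixed-point-free argument are fine, and the latter is essentially the paper's argument too), but the orientation claim is wrong: the $\iota$ you build is \emph{orientation-preserving}, not orientation-reversing. The error is in treating ``transport of $B'_v$ to $B'_{\tau(v)}$'' as orientation-neutral and counting on $\sigma$ to supply the reversal. In fact, because each gluing $D_i^v \sim D_i^w$ is by the identity, the copies of $B'$ inside $B$ necessarily carry \emph{alternating} orientations according to the bipartition of the cube graph (two copies glued by the identity along a common boundary component must be given opposite orientations for the glued manifold to be orientable --- this is the content of ``one can easily check that the resulting manifold is orientable''). The antipodal map $\tau$ swaps the two parts of the bipartition, so the identity map $B'_v\to B'_{\tau(v)}$ is already orientation-reversing on $B$. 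Composing it with the orientation-reversing $\sigma$ therefore gives an orientation-\emph{preserving} isometry.

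The paper's proof instead takes the local map to be the identity on each $B'_v$ (i.e.\ it uses the deck transformation of $B\to B'$ corresponding to the antipodal element of $(\matZ/2\matZ)^3$), and establishes orientation-reversal by writing the antipodal symmetry as $r_1r_2r_3$, where each $r_i$ acts on $B$ as a reflection across the totally geodesic gluing surfaces of label $i$ and hence is orientation-reversing; the composition of three such reflections is orientation-reversing. If you drop $\sigma$ and use the identity as your local map, your argument becomes correct and essentially coincides with the paper's (you would then justify orientation-reversal either via the $r_1r_2r_3$ decomposition or, equivalently, via the alternating-orientation observation above). As written, though, the extra $\sigma$ cancels the reversal you need, and the resulting $\iota$ cannot be used in Martelli's construction, which quotients boundary components of $W''_G$ by this involution and requires the quotient to remain orientable.
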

\begin{proof}
By interpreting the graph in Figure \ref{fig:cubical_graph} as the $1$-skeleton of the cube $Q=[-1,1]^3\subset\mathbb R^3$, the antipodal symmetry $a = -\mathrm{id}\in\mathrm{Isom}(Q)$ induces an involution $\iota\in \mathrm{Isom}(B)$. We have $a = r_1 r_2 r_3$, where $r_i\in\mathrm{Isom}(Q)$ is the reflection $x_i\mapsto-x_i$. Each $r_i$ induces an orientation-reversing isometry of $B$ since it exchanges two copies of $B'$ inside $B$ glued along a totally geodesic surface (resulting from the construction of $B$) and acting as a reflection in such a surface. Therefore $\iota$ is orientation-reversing. Moreover the intersection of all the copies of $B'$ in $B$ is empty. This implies that $\iota$ has no fixed point. Finally, since the boundary components $C_i$, $C'_j$, $i, j \in \{ 1, 2, 3, 4\}$ of the block $B$ correspond to the vertices of the cubical graph permuted by $a$, they can be partitioned into pairs $C_i$, $C'_i = \iota(C_i)$, as desired.
\end{proof}

Next, we shall describe the cusps of $B$. The tessellation of $B$ into copies of the octahedron $O$ induces tessellations of the cusp sections into Euclidean squares.

\begin{lemma}\label{lem:cusps_B}
The connected cusp sections of the block $B$ are
\begin{enumerate}
\item tori $T_{2\times4}$ tessellated by $8$ squares as in Figure \ref{fig:squares} (right), and
\item annuli $A_{2\times2}$ tessellated  by $4$ squares as in Figure \ref{fig:squares} (left).
\end{enumerate}
\end{lemma}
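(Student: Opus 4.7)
The plan is to trace cusp cross-sections stage by stage through the construction of $B$. First, at each of the six ideal vertices of the right-angled octahedron $O$, four mutually orthogonal faces meet, so the link is a Euclidean unit square whose four sides inherit the alternating black/white colouring from the chequerboard on $\partial O$; in particular, the two white sides are opposite sides of the square. Doubling $O$ along its white faces glues two such squares along their two white sides, producing an annulus of circumference $2$ and height $1$, tessellated by two unit squares, whose two boundary circles lie on two distinct components $D_i, D_j$ of $\partial B'$. This describes all six annular cusps $C_{ij}$ of $B'$, each with identical cross-sectional type.

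I would next track how the eight copies $C_{ij}^{v}$ (one per vertex $v$ of the cubical graph) are identified in $B$, splitting into two cases according to whether $\{i,j\}$ contains the index $4$ (the only index \emph{never} appearing as a gluing label). If $\{i,j\}\subset\{1,2,3\}$ and $\{k\}=\{1,2,3\}\setminus\{i,j\}$, then the four cube edges labelled $k$ do not touch $C_{ij}$, so the eight copies split along the $x_k$-axis into two groups of four sitting at the vertices of a 2-face of the cube whose four edges alternate labels $i,j,i,j$. Each group assembles four annular sections in a cycle, alternately identified along their $D_i$- and $D_j$-boundary circles. The resulting surface is tiled by $2\cdot 4 = 8$ unit squares, is connected (trace once around the cycle), and is orientable since $B$ is; hence it must be the torus $T_{2\times 4}$.

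If instead $\{i,j\}=\{k,4\}$ for some $k\in\{1,2,3\}$, then only the $D_k$-boundary of $C_{k,4}^{v}$ can be glued, and only via the unique edge through $v$ with label $k$. The eight copies thus pair off into four pairs, each producing an annulus of circumference $2$ and height $2$, tessellated by $4$ unit squares, which is precisely $A_{2\times 2}$. The only non-routine point is verifying that the cyclic gluing in the first case indeed yields a single torus rather than a Klein bottle or a disconnected surface; this I would handle by combining the orientability of $B$ with the Euler characteristic count $\chi=0$ for the closed surface obtained from four annular pieces glued along circles.
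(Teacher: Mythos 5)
Your proof is correct and follows essentially the same route as the paper: reduce each cusp of $B'$ to the annulus $A_{2\times1}$ by examining the vertex link of $O$ under the chequerboard colouring, then trace the gluings through the cubical graph, distinguishing alternating $\{i,j\}\subset\{1,2,3\}$ cycles (giving tori $T_{2\times4}$) from the $\{k,4\}$ pairings (giving annuli $A_{2\times2}$). Your added discussion of orientability to exclude the Klein bottle, and of which vertices of the cube participate in each $4$-cycle, is a slightly more explicit rendering of what the paper leaves implicit.
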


\begin{proof}
The shape of each cusp of $B'$ is an annulus $A_{2\times1}$ tessellated by two squares, as depicted in Figure \ref{fig:squares} (left), with $h = 1$. Recall that there is a cusp $C_{ij}$ in $B'$ for each pair of labels $\{i, j\}$, $i<j\in\{1,2,3,4\}$, and vice versa.

By construction, there is a cusp in $B$ for every simple cycle with alternating labels $i,j\in\{1,2,3\}$ in the cubical graph from Figure \ref{fig:cubical_graph}. Its section, up to a homothety, is tessellated by $k$ copies of the annulus $A_{2\times1}$, where $k$ is the length of the cycle. Since each such cycle has length $4$, we obtain the tori $T_{2\times4}$. 

On the other hand, the cusps of $B'$ labelled $(i,4)$ give rise to the cusps of $B$ whose sections are tessellated by $2$ copies of the annulus $A_{2\times1}$. Moreover, these are exactly the cusps of $B$ that intersect $\partial B$ non-emptily. Therefore, they are the annuli $A_{2\times2}$.
\end{proof}

\begin{figure}[ht]
\includegraphics[scale=.3]{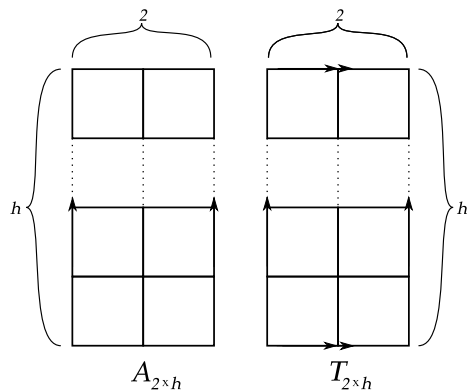}
\caption{\footnotesize The annulus $A_{2\times h}$ (left) and the torus $T_{2\times h}$ (right), with their tessellations into $2h$ copies of the square $Q_0$. Each cusp of the 3-manifold $M_G$ has shape $T_{2\times h}$ for some $h\geq4$.}\label{fig:squares}
\end{figure}

\subsection{Graphs and manifolds} \label{sec:graphs_manifolds_a}

The manifolds $M_k$ from Theorem \ref{thm:main} will be related to graphs of a special kind.

\begin{defn} \label{def:graph}
Let $c$ be a positive integer. A simple $c$-regular graph is called \emph{$1$-factorable} (or, simply, \emph{factorable}), if it admits a proper edge colouring in exactly $c$ colours (i.e. the edges at each vertex are coloured in $c$ colours without repetitions). We shall always suppose that the set of colours is $\{ 1, 2, \ldots, c  \}$. A $c$-regular factorable graph with a chosen colouring is called a \emph{$c$-regular factor}.
\end{defn}

Given a $4$-regular factor $G$, we produce a hyperbolic $3$-manifold. Namely, for each vertex $v$ of $G$, we pick a copy $B^v$ of the block $B$; for every edge $\{v,w\}$ coloured by $j\in\{1,2,3,4\}$, we glue the boundary component $C_j^v$ to $C_j^w$, and ${C'}_j^v$ to ${C'}_j^w$, through the map induced by the identity (c.f. Lemma \ref{lem:involution_Ba} for the notation).

\begin{defn} \label{def:M_G}
For every $4$-regular factor $G$, let $M_G$ be the hyperbolic $3$-manifold described above.
\end{defn}

\begin{rem} \label{rem:arithm}
By construction, there are orbifold coverings $M_G\to B\to B'\to O$ of finite degree -- c.f. Section \ref{sec:colouring_techn} for more details. Moreover, the quotient of $O$ by its symmetry group is isometric to the $[4,3,4]$ hyperbolic orthoscheme, which is an arithmetic orbifold \cite{VS}. In particular, the manifold $M_G$ is arithmetic for any $4$-regular factor $G$.
\end{rem}

It follows from Martelli's work \cite{M}, that the manifold $M_G$ bounds geometrically. 

\begin{prop} \label{prop:bound_a}
The hyperbolic $3$-manifold $M_G$ bounds geometrically a hyperbolic $4$-manifold $W_G$ with
\begin{equation*}
\Vol(W_G)= K\cdot \Vol(M_G),
\end{equation*}
where $K>0$ does not depend on $G$.
\end{prop}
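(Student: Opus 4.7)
The plan is to apply Martelli's theorem \cite{M}, whose key input is that the ideal right-angled octahedron $O$ is a facet of the ideal right-angled $24$-cell $C \subset \matH^4$, and whose conclusion is that a hyperbolic $3$-manifold tessellated by copies of $O$ in a suitable way is isometric to the totally geodesic boundary of a hyperbolic $4$-manifold tessellated by copies of $C$. Since $M_G$ is by construction tessellated by octahedra via the chain $M_G \leftarrow B \leftarrow B' \leftarrow O$, and the chequerboard colouring of the facets of $O$ is preserved at each stage of the assembly, the hypotheses of Martelli's theorem will be satisfied and one obtains a hyperbolic $4$-manifold $W_G$ with totally geodesic boundary $M_G$.

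To pin down the volume ratio, I would carry out the construction of $W_G$ explicitly, mirroring the $3$-dimensional assembly one dimension higher. Namely, I would build a ``$4$-dimensional block'' $\widetilde{B}'$ by doubling $C$ along a suitable family of its $24$ octahedral facets (extending the chequerboard colouring of $\partial O$), so that $B'$ naturally embeds as a totally geodesic submanifold of $\partial\widetilde{B}'$; then assemble $8$ copies of $\widetilde{B}'$ following the cubical graph of Figure~\ref{fig:cubical_graph} to obtain a $4$-manifold $\widetilde{B}$ whose boundary contains a copy of $B$ together with several auxiliary components; finally, take one copy $\widetilde{B}^v$ of $\widetilde{B}$ per vertex $v$ of $G$ and glue along the auxiliary boundary components according to the edge-colouring of $G$, in exactly the same pattern as the copies $B^v$ are glued to form $M_G$. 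The result is a hyperbolic $4$-manifold $W_G$ with totally geodesic boundary $M_G$ by construction.

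The volume formula is then immediate from counting. If $N$ denotes the number of $24$-cells in $\widetilde{B}$ and $n$ the number of octahedra in $B$ (both independent of $G$), then $W_G$ is tessellated by $|V(G)|\cdot N$ copies of $C$ and $M_G$ by $|V(G)|\cdot n$ copies of $O$, so
\[
K = \frac{\Vol(W_G)}{\Vol(M_G)} = \frac{N\cdot\Vol(C)}{n\cdot\Vol(O)}
\]
is a positive constant independent of $G$. The main obstacle I anticipate is verifying that the chequerboard colouring of $\partial O$ extends to a compatible $2$-colouring of the $24$ facets of $C$, and that the doublings and gluings at each of the three successive levels (producing $\widetilde{B}'$, then $\widetilde{B}$, then $W_G$) fit together so that $\partial W_G$ is exactly $M_G$ with no spurious components. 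This is the combinatorial core of Martelli's construction, on which the proof relies as a black box.
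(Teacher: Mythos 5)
Your approach follows the paper's spirit -- build a $4$-dimensional analogue of the $3$-dimensional assembly by exploiting that the ideal right-angled octahedron $O$ is a facet of the ideal right-angled $24$-cell -- but it has a genuine gap precisely at the place you flag as a "verification." Placing a copy of the $24$-cell "above" each octahedron and gluing along the pattern of $G$ does \emph{not} produce a $4$-manifold whose boundary is exactly $M_G$; it produces a $4$-manifold with right-angled corners (the facets of the $24$-cell that do not meet $O$ remain unglued and meet the boundary at corners). The paper's proof (carried out explicitly in Proposition~\ref{prop:bound_na}, to which Proposition~\ref{prop:bound_a} defers) resolves this by an extra application of the colouring technique: it colours the "top" facets with $N$ fresh colours, giving a genuine $4$-manifold $W''_G$ with totally geodesic boundary -- but this boundary now consists of \emph{many} disjoint copies of $M_G$, not one. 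The key final step, completely absent from your proposal, is to quotient all but one of these boundary copies by the orientation-reversing, fixed-point-free involution $\iota_G\in\Iso(M_G)$ (which exists by Lemma~\ref{lem:involution_Ba}). Without this involution one cannot eliminate the spurious boundary components, and the whole point of building the block $B$ from eight copies of $B'$ via the cubical graph was precisely to arrange this symmetry.

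Your volume formula also needs revision once the actual construction is in place: the extra power-of-two factor $2^N$ from the top-facet colouring enters the count of $24$-cells, so the constant is $K=2^N\Vol(B^4)/\Vol(B^3)$ in the paper's notation rather than the ratio you wrote, though of course any such constant independent of $G$ suffices for the statement. The structural point to take away is that "$M_G$ is tessellated by octahedra" alone does not suffice; the hypothesis in Martelli's theorem that makes the boundary come out right requires the fixed-point-free orientation-reversing symmetry, and this is why Lemma~\ref{lem:involution_Ba} is cited in the paper's proof.
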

\begin{proof}
Observe that the manifold $M_G$ is tessellated by ideal right-angled octahedra. Moreover, the involution $\iota \in \mathrm{Isom}(B)$ from Lemma \ref{lem:involution_Ba} extends to an orientation-reversing, fixed point free involution $\iota_G \in \mathrm{Isom}(M_G)$. The proof follows by Martelli's argument from \cite[Theorem 3, Corollary 10]{M}, implying that the $4$-manifold $W_G$, tessellated by ideal right-angled 24-cells, can be built explicitly from $M_G$ (c.f. Proposition \ref{prop:bound_na} for more details). 

Compared to our case, Martelli's construction is more general and guarantees the existence of a $W_G$ with
\begin{equation*}
\Vol(W_G)\leq K'\cdot \Vol(M_G).
\end{equation*}
It will be clear later (c.f. the proof of Proposition \ref{prop:bound_na}) that we actually have
\begin{equation*}
\Vol(W_G)= K\cdot \Vol(M_G).
\end{equation*}
\end{proof}

\begin{rem}\label{rem:arithm4d}
Similarly to Remark \ref{rem:arithm}, the double $D(W_G)$ is arithmetic. Indeed, the quotient of the ideal regular the 24-cell by its symmetry group is the arithmetic orthoscheme $[4,3,3,4]$, c.f. \cite{VS}, which by construction is finitely covered by $D(W_G)$.
\end{rem}

In order to distinguish manifolds $M_G$ by distinguishing their ``glueing graphs'' $G$ (c.f. Proposition \ref{prop:distinguish_a} below), we shall proceed similarly to \cite{CFMP} by studying the cusps of $M_G$'s.

In the sequel, $\calT_O$ and $\calT_B$ denote the tessellations of $M_G$ into copies of $O$ and $B$, respectively. Note that the tessellation $\calT_O$ induces a tessellation of the cusp sections of $M_G$ into Euclidean squares.

\begin{lemma}\label{lem:cusps_MG}
A connected cusp section $C$ of the manifold $M_G$ is a torus $T_ {2\times h}$ tessellated by $2h$ squares as in Figure \ref{fig:squares} (right), where:
\begin{enumerate}
\item $h=4$, if $C$ is contained in the interior of a copy of the block $B$ in the tessellation $\mathcal T_B$;
\item $h=2k \geq 8$, otherwise.
\end{enumerate}
\end{lemma}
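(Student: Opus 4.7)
The plan is to exploit the tessellation $\calT_B$ of $M_G$ by copies of the block $B$. Each connected cusp section $C$ of $M_G$ either lies in the interior of a single block copy $B^v$ (case~(1)) or meets $\partial B^v$ for some $v$ (case~(2)). In case~(1) the cusp is a cusp of $B^v$ disjoint from $\partial B^v$, so Lemma~\ref{lem:cusps_B}(1) immediately gives $C = T_{2\times 4}$ and $h = 4$.

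For case~(2), I would describe $C$ as a cyclic chain $A_1,\ldots,A_k$ of annular cusps of blocks $B^{v_1},\ldots,B^{v_k}$, where each $A_i$ shares a boundary circle with $A_{i-1}$ and with $A_{i+1}$ through the identifications prescribed by the relevant edges of $G$. By Lemma~\ref{lem:cusps_B}(2), each $A_i$ is isometric to $A_{2\times 2}$ with its two boundary circles lying on two distinct boundary components of $B^{v_i}$. Glueing $k$ copies of $A_{2\times 2}$ cyclically in this fashion produces a torus isometric to $T_{2\times 2k}$ tessellated by $4k$ squares, giving $h = 2k$.

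The main obstacle is showing $k \geq 4$. My plan is a parity argument combined with the simplicity of $G$. By the construction of $B$, its eight boundary components are in natural bijection with the vertices of the cube $\{-1,+1\}^3$ (via the $D_4$-faces of the eight copies of $B'$ sitting at the vertices of the cube graph of Figure~\ref{fig:cubical_graph}), and the involution $\iota$ of Lemma~\ref{lem:involution_Ba} acts as the antipodal map. Each annular cusp of $B$ is supported on a single edge of the cube, so it joins two boundary components whose cube labels differ in exactly one coordinate, and thus have opposite $\matZ/2$-parity. Glueings across edges of $G$ preserve the label of a boundary component and hence its parity, so the parity alternates along the chain, and closure forces $k$ to be even.

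Finally, $k = 2$ is ruled out: such a chain would require two distinct edges of $G$ between the same pair of vertices, carrying the two distinct colors determined by the two (non-antipodal, hence distinctly indexed) boundary-component labels of $B^{v_1}$ met by $A_1$, contradicting the simplicity of $G$. Hence $k \geq 4$ and $h = 2k \geq 8$.
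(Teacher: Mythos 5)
Your proof is correct and essentially follows the paper's argument. The paper obtains $k \geq 4$ by noting that the chain of annuli traces a closed walk in $G$ with edges of alternating colours, which forces even length and length at least $4$ by simplicity of $G$; your $\matZ/2$-parity argument on cube-vertex labels encodes the same bipartite structure of the cube graph underlying that colour alternation, and you then rule out $k = 2$ by the same appeal to simplicity.
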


\begin{proof}
If $C$ is contained in the interior of a copy of $B$ in the tessellation $\mathcal T_B$, then by Lemma \ref{lem:cusps_B} its section is $T_{2\times4}$. Otherwise, $\mathcal T_B$ induces a tessellation of $C$ into annuli $A_{2\times2}$. The number of such annuli equals the length $k$ of the corresponding cycle in the graph $G$ made of edges with alternating colours $i \rightarrow j \rightarrow i \rightarrow \ldots \rightarrow j \rightarrow i$.  But every such a cycle in a $4$-regular factor has length $k \geq 4$.
\end{proof}

\begin{figure}[ht]
\includegraphics[scale=1.5]{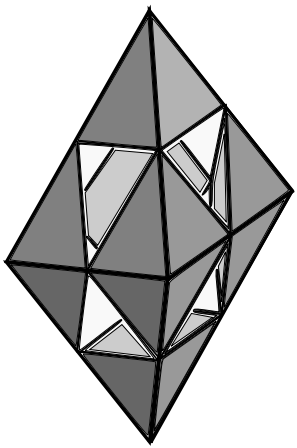}
\caption{\footnotesize The maximal horosection of the octahedron $O$. With the induced Euclidean metric, each square section inside $O$ 
has edge length $1$.}
\label{fig:O-horosection}
\end{figure}

We are ready to show that non-isomorphic $4$-regular factors produce non-isometric manifolds. Here, we consider graphs up to usual graph isomorphism (i.e. not necessarily preserving edge colourings). More precisely:

\begin{prop} \label{prop:distinguish_a}
If two manifolds $M_{G}$ and $M_{G'}$ are isometric, then the graphs $G$ and $G'$ are isomorphic.
\end{prop}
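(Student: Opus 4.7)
The plan is to reconstruct the graph $G$ directly from the isometry type of $M_G$, so that any isometry $M_G \cong M_{G'}$ will automatically force $G \cong G'$ as unlabelled graphs.

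First I would show that the octahedral tessellation $\mathcal T_O$ of $M_G$ is canonical. For this I would invoke the Epstein--Penner decomposition \cite{EP}: since $M_G$ is tessellated by congruent copies of the regular ideal right-angled octahedron $O$, all its cusps carry matching maximal horoball packings, and a direct computation (entirely parallel to the one in \cite{CFMP} for link complements) shows that the Epstein--Penner canonical cell decomposition of $M_G$ coincides with $\mathcal T_O$. Any isometry $f\colon M_G\to M_{G'}$ must therefore send $\mathcal T_O$ to $\mathcal T_{O'}$.

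Next, I would recover the coarser tessellation $\mathcal T_B$ from $\mathcal T_O$. The triangular faces of each octahedron in $\mathcal T_O$ inherit a canonical black/white $2$-colouring, up to a single global colour swap, coming from the bipartition of the face adjacency graph of $O$ used in Section \ref{sec:blocks_a}. Under this canonical colouring, each white--white identification pairs two octahedra into a copy of $B'\subset B$. The black--black identifications fall into (i) intra-block gluings, living inside a single copy of $B$, and (ii) inter-block gluings, corresponding to the edges of $G$. These two types should be distinguishable via the cusp shapes of $M_G$: using Lemmas \ref{lem:cusps_B} and \ref{lem:cusps_MG}, a black face lies on an inter-block gluing surface precisely when the traces of its incident cusps on the face are the $A_{2\times 2}$ annular pieces of Lemma \ref{lem:cusps_B}, which glue across block boundaries to produce type~(2) cusps $T_{2\times 2k}$ with $k\geq 4$ in $M_G$. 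Since the cusp sections together with their induced square tessellations are Euclidean isometry invariants of $M_G$, the refinement $\mathcal T_O \rightsquigarrow \mathcal T_B$ would then be canonical.

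Once $\mathcal T_B$ is recovered canonically, let $\tilde G$ denote the multigraph whose vertices are the copies of $B$ in $\mathcal T_B$, with one edge for each pair of identified boundary components between two such copies. By construction $\tilde G$ is the edge-doubling of $G$: each edge $\{v,w\}\in E(G)$ coloured by $j$ produces the two gluings $C_j^v\sim C_j^w$ and ${C'}_j^v\sim {C'}_j^w$. Hence $G$ is determined by $\tilde G$ up to unlabelled graph isomorphism. The isometry $f$ sends $\mathcal T_B\mapsto \mathcal T_{B'}$ and therefore induces a multigraph isomorphism $\tilde G\cong\tilde G'$, giving $G\cong G'$ as desired.

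The hard part will be the second step: rigorously verifying that intra-block and inter-block black-face gluings can be detected from the cusp sections alone. This boils down to a careful bookkeeping of how each face of an octahedron meets the cusps of the containing block $B$, via Lemmas \ref{lem:cusps_B} and \ref{lem:cusps_MG}, and it is precisely where the specific geometry of the block construction comes into play.
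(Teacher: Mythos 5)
Your proposal is essentially the paper's proof: recover the octahedral tessellation $\mathcal T_O$ via the Epstein--Penner decomposition, then use the cusp shapes from Lemma~\ref{lem:cusps_MG} to recover the coarser tessellation $\mathcal T_B$ and hence the graph $G$. The paper pins down the relevant cusp section intrinsically (each component has systole $2$) rather than appealing to a ``maximal horoball packing,'' and recovers $\mathcal T_B$ more directly by observing that the $T_{2\times 4}$ cusps are exactly those interior to the blocks; your face-by-face classification via the black/white colouring is a more elaborate but equivalent route to the same conclusion.
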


\begin{proof}
Given a manifold $M := M_G$, we show how to recover the glueing graph $G$ solely from the topology and geometry of $M$.

Choose the maximal horosection $S_0$ of the ideal octahedron $O$ depicted in Figure~\ref{fig:O-horosection}, which is preserved by the action of its whole symmetry group. It consists of $6$ copies of a Euclidean square $Q_0$ with edge length $1$, whose interiors are disjoint from each other. Let $S$ denote the cusp section of $M_G$ induced by $S_0$.

By Lemma \ref{lem:cusps_MG}, the set $S\subset M_G$ can be defined as the cusp section of $M_G$ such that each of its connected components has systole $2$ with respect to the induced Euclidean metric. We have recovered the cusp section $S$ from $M_G$. 

By the Epstein -- Penner decomposition theorem \cite{EP}, a cusp section of a non-compact hyperbolic manifold determines a unique (and invariant under isometries) tessellation of the manifold into ideal polytopes, and in the case of $M_G$ the tessellation $\mathcal T_O$ is the one determined by $S$ (and by all the cusp sections of $M_G$ that are homothetic to $S$). This allows us to recover the tessellation $\calT_O$ from $M_G$.

To recover $\calT_B$ from $\calT_{O}$, it suffices to observe that, by Lemma \ref{lem:cusps_MG}, the cusps of $M_G$ with shape $T_{2\times4}$ are exactly those contained in the interior of the copies of $B$ in $\calT_B$. Thus, having recovered $\calT_B$ we also recover the glueing graph $G$.

Finally, any isometry $M_{G}\to M_{G'}$ sends $S$ to $S'$ (the cusp section of $M_{G'}$ whose every connected component has systole $2$), and thus, thanks to the previous discussion, induces a graph isomorphism $G\to G'$.
\end{proof}

If $M_G$ and $M_{G'}$ are non-isometric, then $W_G$ and $W_{G'}$ are non-isometric. This does not imply that the doubles $D(W_G)$ and $D(W_{G'})$ are non-isometric. However, it is not difficult to estimate the number of graphs producing the same $4$-dimensional double.

\begin{prop}\label{prop:distinguish_doubles_a}
Let $G$ be a $4$-factor and $v = \Vol(D(W_G))$. Up to graph isomorphism, there are at most $C\cdot v$ $4$-factors $G'$ such that $D(W_{G'})$ is isometric to $D(W_G)$, where $C > 0$ is a constant independent of $v$ and $G$.
\end{prop}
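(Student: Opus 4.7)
The plan is to associate, to each $4$-factor $G'$ with $D(W_{G'})$ isometric to $D(W_G)$, a totally geodesic, connected, separating hypersurface inside $D(W_G)$, and then to bound the number of such hypersurfaces by the number of octahedral $3$-facets of the $24$-cell tessellation of $D(W_G)$, which grows linearly in $v$.

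First, given any isometry $\phi\colon D(W_{G'})\to D(W_G)$, set $N:=\phi(M_{G'})\subset D(W_G)$; this is a totally geodesic, connected, separating hypersurface, isometric to $M_{G'}$. By Proposition~\ref{prop:distinguish_a}, isomorphism classes of $4$-factors are in bijection with isometry classes of the $3$-manifolds $M_{G'}$, so it suffices to bound the number of such hypersurfaces $N$. The key point is to show that every such $N$ is a union of octahedral facets of the $24$-cell tessellation of $D(W_G)$ (from Remark~\ref{rem:arithm4d}). For this, I would argue that the $24$-cell tessellation is canonical: it coincides with the Epstein--Penner decomposition of $D(W_G)$, since the flat tori that form the maximal horospherical cross-sections of the cusps are tessellated in a canonical way by the cubical horospherical sections of the ideal regular $24$-cells. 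Consequently, any isometry between the doubles preserves the $24$-cell tessellation, and since $M_{G'}$ is itself a union of octahedral facets of the tessellation of $D(W_{G'})$ (being tessellated by ideal right-angled octahedra, each of which is a facet of some $24$-cell of $W_{G'}$), its image $N$ inherits the same property in $D(W_G)$.

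It then remains to count the admissible hypersurfaces. Each $N$ contains at least one octahedral facet, and two distinct connected totally geodesic hypersurfaces cannot share an octahedral facet: such a facet determines a unique hyperplane in $\matH^4$ and hence a unique connected totally geodesic hypersurface passing through it. Therefore the number of admissible $N$'s is at most the total number of octahedral facets in the $24$-cell tessellation of $D(W_G)$, namely $12\, v/V_{24}$, where $V_{24}$ is the volume of the ideal regular $24$-cell (each cell has $24$ octahedral facets, each shared by two adjacent cells). Setting $C:=12/V_{24}$, independent of $v$ and $G$, then yields the desired bound. The main obstacle I anticipate is the canonicity step, i.e.\ verifying that any isometry between such doubles must respect the $24$-cell tessellation; this should follow from a routine application of the Epstein--Penner theorem, together with the fact that the maximal horospherical section of the ideal regular $24$-cell is a unit cube.
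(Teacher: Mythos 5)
Your proof is correct in spirit but takes a genuinely different route from the paper, and it leaves a real gap at the step you yourself flag as the ``main obstacle.'' The paper's own argument is much shorter: for each $M_{G'}$ with $D(W_{G'})\cong D(W_G)$, the reflection of $D(W_{G'})$ in $M_{G'}$ transports, via the isometry, to an involution of $D(W_G)$; distinct isometry classes of $M_{G'}$ give distinct involutions (they have distinct fixed-point sets), so the count is bounded by $|\Iso(D(W_G))|$, which by Kazhdan--Margulis is at most $C\cdot v$. Proposition~\ref{prop:distinguish_a} then converts this into a bound on graph-isomorphism classes. This avoids any appeal to the canonicity of a polyhedral decomposition of $D(W_G)$, and the same device is reused verbatim in the non-arithmetic case (Proposition~\ref{prop:distinguish_doubles_na}), where no Epstein--Penner argument is available because $P^3$ is not ideal.

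Your alternative -- mapping each $M_{G'}$ to a separating totally geodesic hypersurface and counting octahedral facets of the $24$-cell tessellation of $D(W_G)$ -- is a reasonable idea and the facet-counting step is clean: two embedded connected totally geodesic hypersurfaces containing the same octahedral facet must coincide, and the number of such facets is linear in $v$. What is not routine is the assertion that the $24$-cell tessellation is the Epstein--Penner decomposition in an isometry-invariant sense. The Epstein--Penner construction depends on a choice of horoball neighbourhoods of the cusps, and the cusp cross-sections of $D(W_G)$ induced by the $24$-cell tessellation generally have \emph{different} Euclidean volumes, so the standard ``equal volume'' normalisation need not produce the $24$-cell tessellation. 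To make your argument work you would have to carry out, for $D(W_G)$, an analogue of the argument in Proposition~\ref{prop:distinguish_a}: pin down a geometrically distinguished cusp section (there, the one whose components have systole $2$; here, something characterising the unit-cube level in the flat $3$-manifold cusp sections) and then show that Epstein--Penner applied at that section recovers the $24$-cell tessellation, as in \cite{CFMP,KMT}. This is doable but it is a genuine additional lemma, not a routine application, and the paper deliberately sidesteps it.
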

\begin{proof}
We need an estimate for the number of possible $M_{G'}$'s such that $D(W_{G'})\cong D(W_{G})$. Since there is an isometric involution of $D(W_{G})$ for each such $M_{G'}$, this number does not exceed the cardinality of $\Iso(D(W_{G}))$, which is at most $C\cdot v$ for some $C>0$, as a consequence of the Kazhdan -- Margulis theorem \cite{KM}. Finally, by Proposition \ref{prop:distinguish_a} each such $M_{G'}$ determines $G'$ up to graph isomorphism.
\end{proof}

\subsection{Proof of Theorem \ref{thm:main} in the arithmetic case} \label{sec:conclusion_a}

Let us partition all $4$-regular factors into isomorphism classes (as uncoloured graphs), and choose a representative for each of them.
We index them $G_1, G_2, \ldots$ in a non-decreasing order according to the number of vertices. Let $M_k= M_{G_k}$ and $W_k=W_{G_k}$ denote the corresponding manifolds.

We need the following:

\begin{lemma}\label{lem:graphs}
Let $C(m)$ be the number of $4$-regular factorable graphs with at most $m$ vertices.
Then there exists $C > 1$ such that $C(m)\geq m^{Cm}$, for $m$ big enough.
\end{lemma}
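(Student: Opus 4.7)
The plan is to establish the lower bound by counting isomorphism classes of bipartite $4$-regular graphs on $m=2n$ vertices; these are automatically factorable by K\"onig's edge-colouring theorem, so their number is a lower bound for $C(m)$.

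First, I would lower-bound the number of labeled $4$-regular bipartite graphs with a fixed bipartition $A\sqcup B$, $|A|=|B|=n$. An ordered $4$-tuple of pairwise edge-disjoint perfect matchings of $K_{n,n}$ is the same data as a $4\times n$ Latin rectangle, and the number of such rectangles is at least $(n!)^4/C_1^{n}$ for some constant $C_1>0$ (by the classical asymptotic of Erd\H{o}s and Kaplansky, or indeed any weaker bound of that form). Since each $4$-regular bipartite graph on $A\sqcup B$ admits at most $(4!)^{2n}$ ordered proper $4$-edge-colourings (one factor of $4!$ per vertex), dividing by this overcount gives at least $(n!)^4/C_2^{n}$ labeled $4$-regular bipartite graphs on $A\sqcup B$, for a suitable $C_2>0$.

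Second, to pass to isomorphism classes I would divide by the trivial overcount $m!=(2n)!$. Applying Stirling's formula one finds
\[
C(m)\;\geq\;\frac{(n!)^4}{C_2^{\,n}\,(2n)!}\;\sim\;\frac{n^{4n}\,e^{-4n}}{C_2^{\,n}\,(2n)^{2n}\,e^{-2n}}\;=\;\frac{n^{2n}}{D^{n}}\;=\;\frac{m^m}{E^{m}}
\]
for suitable constants $D,E>0$, using $n^{4n}/(2n)^{2n}=n^{2n}/4^{n}$ and $m=2n$. Taking logarithms yields $\log C(m) \geq m\log m - O(m)$, from which the desired inequality $C(m)\geq m^{Cm}$ follows for $m$ sufficiently large.

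The main obstacle is purely bookkeeping: the passage from ordered tuples of matchings to underlying graphs, and from labeled graphs to isomorphism classes, each introduces only an exponential-in-$m$ correction factor. None of these affects the dominant $m\log m$ term in the logarithm of the bound, so the super-exponential growth of the form $m^{Cm}$ survives throughout the chain of estimates.
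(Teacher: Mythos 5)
Your proof is correct but takes a genuinely different route from the paper's. The paper derives the lemma from two citations: Bollob\'as's asymptotics for the number of unlabelled $4$-regular graphs on $\le m$ vertices (growth type $m^m$), together with the Robinson -- Wormald result that asymptotically almost all $4$-regular graphs are $4$-factorable, so that restricting to factorable graphs does not change the growth type. You instead specialise to bipartite $4$-regular graphs -- automatically $1$-factorable by K\"onig's theorem -- count them directly by identifying ordered $4$-tuples of edge-disjoint perfect matchings of $K_{n,n}$ with $4\times n$ Latin rectangles, and then divide out the exponential overcounting coming from edge-colourings and from relabellings. Your route is more elementary and self-contained: beyond K\"onig and Stirling, the only input is a lower bound on the number of $4\times n$ Latin rectangles of the form $(n!)^4/C_1^n$, which is much weaker than the Erd\H{o}s -- Kaplansky asymptotic you invoke and follows, for instance, from the van der Waerden permanent bound applied row by row. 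The paper's route is shorter on the page but leans on a nontrivial probabilistic fact about random regular graphs. Both arguments give a bound of the form $C(m)\geq m^m/E^m$, hence $C(m)\geq m^{Cm}$ for any fixed $C<1$ and $m$ large, which is all that Theorem~\ref{thm:main}(3) actually requires. As a side note, the ``$C>1$'' in the statement of the lemma is surely a typo for ``$C>0$'': the total number of unlabelled $4$-regular graphs on at most $m$ vertices is $m^{m(1+o(1))}$, so neither your argument, the paper's, nor any other could yield an exponent $C>1$.
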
   

\begin{proof}
The Lemma follows from the facts that:
\begin{itemize}
\item the number $R(m)$ of all $4$-regular graphs with $\leq m$ vertices has growth type of $m \mapsto m^m$ by \cite{Bollobas}, and
\item the probability $\mathrm{P}(\mbox{G is 4-factorable } | \mbox{ G is 4-regular}) \rightarrow 1$ as $m\to \infty$ by \cite{RW}.
\end{itemize}
\end{proof}

Theorem \ref{thm:main} follows essentially from Proposition \ref{prop:bound_a} and Lemma \ref{lem:graphs}. More precisely:

\begin{enumerate}
\item follows from Proposition \ref{prop:distinguish_a} and the Mostow-Prasad rigidity \cite{Mos,Pra};
\item follows from Remark \ref{rem:arithm};
\item follows from Lemma \ref{lem:graphs} by observing that the number of vertices of the graph $G$ equals the number of copies of the block $B$ in the manifold $M_G$;
\item follows by construction; 
\item follows from Proposition \ref{prop:distinguish_doubles_a} up to taking a subsequence (indeed, $v^v/v$ has growth type of $v^v$);
\item follows from Remark \ref{rem:arithm4d};
\item follows from Proposition \ref{prop:bound_a}.
\end{enumerate}

The proof of Theorem \ref{thm:main} in the arithmetic case is now complete.

\section{The non-arithmetic case}\label{sec:main_na}
In this section, we extend the previous construction to the non-arithmetic case. Beforehand, we will need to build a finite-volume right-angled hyperbolic $4$-polytope with a facet that is a non-arithmetic polyhedron.

\subsection{Coxeter polytopes}\label{sec:coxeter_polytopes}

Recall that a convex polytope $P\subset\matH^n$ is a \emph{Coxeter polytope} if all its dihedral angles are integral submultiples of $\pi$. In this case, the group $\Gamma_P<\Iso(\matH^n)$ generated by reflections through the supporting hyperplanes of $P$ is discrete, and the quotient orbifold $\matH^n/_{\Gamma_P}$ is isometric to $P$. A similar statement holds for Euclidean and spherical Coxeter polytopes. We refer to the book \cite{VS} for all the details about Coxeter polytopes, including the arithmeticity of their reflection groups.

A reflection group $\Gamma_P$ is encoded by its \emph{Coxeter diagram} $\mathcal D$, which is a weighted graph defined as follows. There is a vertex in $\mathcal D$ for each supporting hyperplane of $P$. If two hyperplanes intersect at an angle of $\pi/m$, then the corresponding vertices of $\mathcal D$ are joined by an edge with label $m$. There are two exceptions: if $m=2$ the edge and the label are omitted altogether, while if $m=3$ only the label is omitted. Two hyperplanes that are tangent at infinity (resp. ultraparallel) are represented in $\mathcal D$ by an edge with label $\infty$ (resp. a dashed edge).

The lower-dimensional strata of a Coxeter polytope can also be recovered from its diagram. However, we shall only need to determine which vertices are finite (i.e. located inside $\matH^n$) and which are ideal (i.e. located on the ideal boundary $\partial \matH^n$).

\begin{thm}[Vinberg \cite{V}] \label{thm:vinberg}
Given a Coxeter polytope $P\subset\matH^n$ with Coxeter diagram $\mathcal D$, let $H_1,\ldots,H_k$ be a collection of supporting hyperplanes of $P$ and $\mathcal D'$ be the associated sub-diagram. Then, $H_1\cap\ldots\cap H_k$ is a finite vertex of $P$ if and only if $k=n$ and $\mathcal D'$ is the Coxeter diagram of a spherical reflection group.
\end{thm}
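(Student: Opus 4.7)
The plan is to work in the Lorentzian hyperboloid model $\matH^n \subset \matR^{n,1}$. Each supporting hyperplane $H_i$ corresponds (after orienting outward from $P$) to a unit spacelike normal vector $e_i \in \matR^{n,1}$, and the Coxeter condition translates to $\langle e_i, e_j\rangle = -\cos(\pi/m_{ij})$ when $H_i,H_j$ meet at angle $\pi/m_{ij}$ and $\langle e_i, e_j \rangle = 0$ whenever $H_i,H_j$ are perpendicular. In particular, the Gram matrix $G_{\mathcal D'}=(\langle e_i,e_j\rangle)_{1\le i,j\le k}$ is \emph{read off directly from the sub-diagram} $\mathcal D'$.

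First I would set up a linear-algebra dictionary relating the type of the intersection $H_1\cap\cdots\cap H_k$ to the signature of $G_{\mathcal D'}$. The intersection is non-empty in $\overline{\matH^n}$ precisely when the orthogonal complement of $\mathrm{span}(e_1,\dots,e_k)$ inside $\matR^{n,1}$ meets the light cone, and a standard signature count shows that the intersection is a single \emph{finite} point of $\matH^n$ if and only if $\mathrm{span}(e_1,\dots,e_k)$ is positive-definite of dimension $n$ (so that its orthogonal complement is a negative line through the origin). Thus ``finite vertex'' corresponds precisely to $k=n$ together with positive-definiteness of $G_{\mathcal D'}$. (An entirely parallel statement characterises ideal vertices by $k=n$ and corank-one positive semi-definiteness.)

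Next I would quote the classical classification of finite (spherical) Coxeter groups, in the form: a Coxeter diagram on $k$ nodes is the diagram of a spherical reflection group acting on $S^{k-1}$ if and only if its Gram matrix, normalised as above, is positive definite. Combining this classification with the linear-algebra dictionary yields both directions at once, since ``$k=n$ and $G_{\mathcal D'}$ positive definite'' is simultaneously equivalent to ``$H_1\cap\cdots\cap H_k$ is a finite vertex'' and to ``$\mathcal D'$ is a spherical Coxeter diagram on $n$ nodes''.

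The one point that requires a little care, and which I expect to be the main conceptual obstacle, is to check that the intersection actually \emph{is} a vertex of the polytope $P$ and not just an abstract point in $\matH^n$. For the forward direction I would argue geometrically: given a finite vertex $v$ of $P$, choose a small hyperbolic metric ball $B$ around $v$. Then $P\cap B$ is a cone over a spherical polytope $L\subset S^{n-1}$ (the link of $v$), and the reflections in the $H_i$ containing $v$ generate the stabiliser of $v$ in $\Gamma_P$, which is a discrete subgroup of the compact group $\mathrm{O}(n)$ acting on the sphere of directions at $v$, hence finite. This finite group is a spherical Coxeter group whose diagram is exactly $\mathcal D'$, forcing $k=n$. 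For the converse, if $k=n$ and $\mathcal D'$ is spherical, the positive-definiteness gives a unique finite intersection point $v\in\matH^n$; one then notes that $v\in P$ because each of the closed half-spaces bounded by the $H_i$ contains $P$, and $v$ lies on the common boundary, so by convexity $v$ is a face and in fact the $0$-dimensional face cut out by $H_1,\dots,H_n$.
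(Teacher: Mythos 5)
The paper cites this result to Vinberg and provides no proof of its own, so there is nothing in the source to compare against; I can only assess your sketch. Your framework --- outward Lorentzian unit normals $e_i$, the Gram matrix read off from $\mathcal D'$, the signature dictionary, and the classification of spherical Coxeter diagrams --- is the standard route, and your forward direction is essentially right. The converse direction, however, breaks down at exactly the step you yourself single out as delicate. You assert that $v\in P$ ``because each of the closed half-spaces bounded by the $H_i$ contains $P$, and $v$ lies on the common boundary, so by convexity $v$ is a face.'' That inference is not valid: a point lying on the boundaries of $n$ of the half-spaces cutting out a polytope need not satisfy the inequalities coming from the \emph{remaining} half-spaces. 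For instance, take a convex quadrilateral in $\matH^2$ whose two opposite sides have supporting geodesics meeting, outside the quadrilateral, at some angle $\theta$ with $0<\theta<\pi$; the corresponding $2\times2$ Gram submatrix has off-diagonal entry of absolute value $<1$, hence is positive definite, and both half-planes contain the quadrilateral, yet the intersection point is not a vertex of it --- it is cut off by the other two sides. Nothing in your argument at that step invokes the Coxeter (acute-angle) hypothesis, so it cannot possibly be correct as stated.

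What actually closes the gap is the acute-angle condition $\langle e_i,e_j\rangle\le 0$ for $i\ne j$, which holds because all dihedral angles of a Coxeter polytope are at most $\pi/2$ (and $\langle e_i,e_j\rangle\le-1$ when the facets are disjoint). Under this hypothesis, a positive-definite Gram submatrix is a Stieltjes matrix and therefore has an entrywise non-negative inverse. Using this fact together with a short Lorentzian computation, one shows that the unique intersection point $v$, normalised so that $\langle v,v\rangle=-1$ and $v$ lies on the same sheet of the hyperboloid as $P$, satisfies $\langle v,e\rangle\le 0$ for \emph{every} facet normal $e$ of $P$, not just the chosen $n$; hence $v\in\overline{P}$ and is the vertex $H_1\cap\cdots\cap H_n$. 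This is precisely Vinberg's lemma on acute-angled polyhedra (which also yields simplicity at finite vertices, the fact that quietly underlies ``$k=n$'' in your forward direction), and it --- not an abstract convexity argument --- is the real content of the converse. Without it, your signature dictionary only identifies a point of $\matH^n$, not a vertex of $P$.
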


The Coxeter diagrams of spherical and Euclidean reflection groups are classified, c.f. \cite[Chapter 5, $\S$1, Table 1]{VS}. Note that, by the Mostow-Prasad rigidity, in dimension $n\geq3$ the Coxeter diagram of a finite-volume Coxeter polytope $P\subset\matH^n$ determines $P$ up to isometry, and $\Gamma_P$ up to conjugation in $\Iso(\matH^n)$ (c.f. also \cite[\S 3]{Andreev1} and \cite{Andreev2}).

\begin{figure}[ht]
\includegraphics[scale=.85]{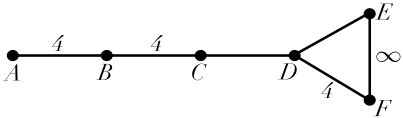}
\caption{\footnotesize The Coxeter diagram for $Q^4$}
\label{fig:q4-diagram}
\end{figure}

\subsection{Two pyramids} \label{sec:pyramids}

The  Coxeter diagram $\mathcal D$ depicted in Figure~\ref{fig:q4-diagram} represents a non-compact polytope $Q^4\subset\matH^4$ of finite volume. Combinatorially, $Q^4$ is a pyramid over a triangular prism, c.f. Tumarkin's list of polytopes in \cite{T}. The Schlegel diagram of $Q^4$ is given in Figure \ref{fig:schlegel}.

\begin{figure}[h]
\includegraphics[scale=.3]{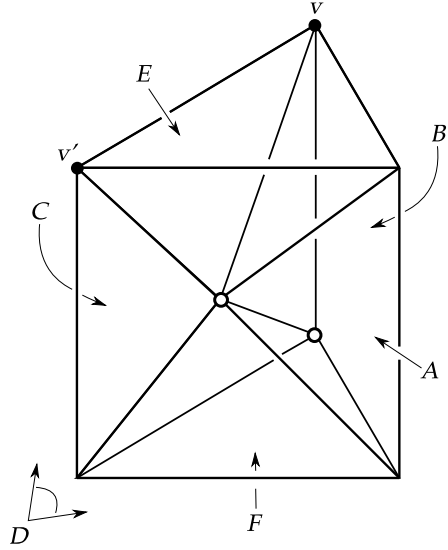}
\caption{\footnotesize A combinatorial picture of the $3$-dimensional boundary of the pyramid $Q^4$, projected into $S^3$. The ideal vertices are hollow. Bold dots indicate the finite vertices $v$ and $v'$, c.f.  Section \ref{sec:P}.}
\label{fig:schlegel}
\end{figure}

Let $H_X\subset\matH^4$ denote the hyperplane associated with the vertex of $\mathcal D$ having label $X\in\{A,\ldots,F\}$, and let $r_X$ be the reflection through $H_X$. Moreover, we identify $\matH^3$ with the hyperplane $H_A\subset\matH^4$. Also, let $Q^3\subset\matH^3$ be the facet $Q^4\cap H_A$ of the pyramid $Q^4$. Observe that combinatorially $Q^3$ is a pyramid over a quadrilateral.

\begin{figure}[h]
\includegraphics[scale=.85]{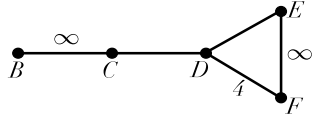}
\caption{\footnotesize The Coxeter diagram for $Q^3$}
\label{fig:q3-diagram}
\end{figure}

\begin{prop}
The facet $Q^3\subset\matH^3$ of $Q^4\subset\matH^4$ is a non-compact finite-volume polyhedron with Coxeter diagram depicted in Figure~\ref{fig:q3-diagram}. Its associated reflection group $\Gamma_{Q^3}<\Iso(\matH^3)$ is non-arithmetic. 
\end{prop}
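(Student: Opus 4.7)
The plan is to extract the Coxeter diagram of $Q^3$ from that of $Q^4$ by restriction to the hyperplane $H_A$, to read off finite volume and non-compactness via Vinberg's Theorem~\ref{thm:vinberg}, and to deduce non-arithmeticity of $\Gamma_{Q^3}$ by applying Vinberg's arithmeticity criterion to the resulting $3$-dimensional diagram.

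For the first two tasks I would proceed as follows. The facets of $Q^3 = Q^4 \cap H_A$ are the non-empty intersections $H_A \cap H_X$ for those $X\in\{B,C,D,E,F\}$ such that $H_X$ meets $H_A$; equivalently, those $X$ joined to $A$ by a solid (possibly $\infty$-labelled) edge in the diagram of $Q^4$. For any two such $X,Y$, the $2$-planes $H_A\cap H_X$ and $H_A\cap H_Y$ inside $H_A\cong\matH^3$ meet, are tangent at infinity, or are ultraparallel in accord with the corresponding edge of the $Q^4$-diagram, and in the intersecting case they do so at the same dihedral angle. By inspection of Figure~\ref{fig:q4-diagram}, this produces precisely Figure~\ref{fig:q3-diagram}. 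To conclude that $Q^3$ is non-compact of finite volume, I would apply Theorem~\ref{thm:vinberg} to each sub-diagram of $Q^3$ that can determine a vertex: every such sub-diagram must be spherical (giving a finite vertex) or Euclidean (giving an ideal vertex), and the presence of at least one Euclidean sub-diagram yields non-compactness.

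The hard step is the non-arithmeticity of $\Gamma_{Q^3}$. My approach would be to apply Vinberg's arithmeticity criterion: form the Gram matrix $G$ of the hyperplanes bounding $Q^3$ (entries $-\cos(\pi/m_{ij})$, $-1$, or $-\cosh d_{ij}$ depending on whether the pair intersects at angle $\pi/m_{ij}$, is tangent at infinity, or is ultraparallel at distance $d_{ij}$), let $K$ be the field generated by all cyclic products $2^k\, g_{i_1 i_2} g_{i_2 i_3}\cdots g_{i_k i_1}$ taken over the cycles of the diagram, and check whether $K$ is totally real, all cyclic products lie in $\mathcal{O}_K$, and $\sigma(2G)$ is positive semidefinite for every non-identity embedding $\sigma\colon K\to\matR$. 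I expect at least one of these conditions to fail for the small diagram of Figure~\ref{fig:q3-diagram}: typically a cycle involving an exotic dihedral angle together with an ultraparallel length yields a Galois conjugate form that still has hyperbolic signature, forcing non-arithmeticity. A cleaner alternative would be to invoke the known classification of arithmetic hyperbolic Coxeter $3$-polyhedra (after Vinberg, Bugaenko, and others) and note that $Q^3$ is absent from those lists. Either way, the verification reduces to a finite, explicit check on the diagram of Figure~\ref{fig:q3-diagram}, and this is where the author's proof will concentrate.
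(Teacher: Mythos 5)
There are two genuine gaps in the first half of your proposal, both concerning how to pass from the diagram of $Q^4$ to that of $Q^3$; the non-arithmeticity part is essentially what the paper does.

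First, you misstate the Coxeter diagram convention: you say the facets of $Q^3=Q^4\cap H_A$ correspond to those $X$ joined to $A$ by a solid (possibly $\infty$-labelled) edge, but in the standard convention (and the one the paper uses, cf.\ Section~\ref{sec:coxeter_polytopes}) the \emph{absence} of an edge means the hyperplanes are orthogonal, hence certainly intersect, while an $\infty$-labelled edge means the hyperplanes are tangent at infinity and do \emph{not} meet in $\matH^4$. In the case at hand, $H_A$ is orthogonal to $H_C, H_D, H_E, H_F$ (no edges from $A$ to these in the diagram) and meets $H_B$ non-orthogonally; your criterion would select exactly the wrong set of hyperplanes.

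Second, your general principle that ``$H_A\cap H_X$ and $H_A\cap H_Y$ meet, are tangent, or are ultraparallel in accord with the corresponding edge of the $Q^4$-diagram, and in the intersecting case at the same dihedral angle'' is false. Slicing two hyperplanes by a third does not in general preserve the dihedral angle between them; it does so only under additional orthogonality hypotheses. The paper's proof is precisely about identifying those hypotheses: the angle between $H_A\cap H_X$ and $H_A\cap H_Y$ is read off the $Q^4$-diagram when both $H_X, H_Y\perp H_A$ ($X,Y\in\{C,D,E,F\}$), and the angle between $H_A\cap H_B$ and $H_A\cap H_X$ is $\pi/2$ because $H_B\perp H_X$ for $X\in\{D,E,F\}$ (a small separate argument with projected normals). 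The one remaining pair $(B,C)$ satisfies none of these orthogonality conditions and is \emph{not} resolved by the $Q^4$-diagram; the paper determines it instead by a combinatorial inspection of the Schlegel diagram of $Q^4$ (Figure~\ref{fig:schlegel}), showing $\matH^3\cap H_B$ and $\matH^3\cap H_C$ are tangent at infinity. Your approach, taken literally, skips this case.

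For the non-arithmeticity, applying Vinberg's arithmeticity criterion to the $Q^3$-diagram is exactly what the paper does, citing \cite[Chapter 5, Theorem 3.1]{VS} and the computations in \cite{GJK}. One caveat: I would not rely on your ``cleaner alternative'' of consulting a classification of arithmetic hyperbolic Coxeter $3$-polyhedra, as no complete such classification in the finite-volume case is available; the direct check via Vinberg's criterion on this single diagram is the right thing to do.
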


\begin{proof}
We have to find the dihedral angle in $\matH^3$ between the planes $\matH^3\cap H_X$ and $\matH^3\cap H_Y$, for all $X\neq Y\in\{B,\ldots,F\}$. In this case we shall avoid most of the tedious computations as follows. Observe that $\matH^3=H_A$ is orthogonal to $H_X$ for all $X\in\{C,\ldots,F\}$, and thus the angle between $\matH^3\cap H_X$ and $\matH^3\cap H_Y$ is the same as the angle between $H_X$ and $H_Y$, for all $X\neq Y\in\{C,\ldots,F\}$. These angles can be then transferred from the diagram in Figure~\ref{fig:q4-diagram} to the diagram in Figure~\ref{fig:q3-diagram}.

It remains to compute the angle between $\matH^3\cap H_B$ and $\matH^3\cap H_X$ for $X\in\{C,\ldots,F\}$. As before, since $H_B\perp H_X$ for all $X\in\{D,E,F\}$, then the angle between $\matH^3\cap H_B$ and $\matH^3\cap H_X$ is the same as the angle between $H_B$ and $H_X$ for all $X\in\{D,E,F\}$.

The only dihedral angle left is that between $\matH^3\cap H_B$ and $\matH^3\cap H_C$.  As depicted in Figure \ref{fig:schlegel}, these two planes are tangent at the ideal boundary $\partial \matH^4$. 

The non-arithmeticity of $\Gamma_{Q^3}$ easily follows by applying Vinberg's criterion \cite[Chapter 5, Theorem 3.1]{VS} to the diagram -- see also \cite{GJK}.
\end{proof}

We conclude the section with two more facts about the pyramid $Q^3$. The reader interested only in the proof of Theorem \ref{thm:main} may skip to Section \ref{sec:P}.

Recall that a lattice $\Gamma<\Iso(\matH^n)$ is \emph{maximal} if $\Gamma$ is \emph{not} a proper finite-index subgroup of any discrete group $\Gamma'<\Iso(\matH^n)$. The following observation is non-trivial, and interesting in itself.

\begin{fact} \label{fact:maximal}
The lattice $\Gamma_{Q^3}<\Iso(\matH^3)$ is maximal.
\end{fact}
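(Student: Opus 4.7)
The plan is to combine Margulis's commensurator theorem with a standard reflection-subgroup analysis applied to the polytope $Q^3$.

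\textbf{Step 1: Reduction to the commensurator.} Since $\Gamma_{Q^3}$ is non-arithmetic, Margulis's commensurator theorem asserts that $\Gamma_{Q^3}$ has finite index in its commensurator $\mathrm{Comm}_{\Iso(\matH^3)}(\Gamma_{Q^3})$, which is itself a lattice. Any discrete group $\Gamma' < \Iso(\matH^3)$ in which $\Gamma_{Q^3}$ sits as a finite-index subgroup is contained in this commensurator, and is in particular a lattice. It therefore suffices to rule out the existence of a lattice $\Gamma'$ properly containing $\Gamma_{Q^3}$ with finite index.

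\textbf{Step 2: Extracting a reflection subgroup.} Suppose $\Gamma_{Q^3} \lneq \Gamma'$ is a finite-index inclusion of lattices, and let $\Gamma^r \leq \Gamma'$ be the subgroup generated by all the reflections contained in $\Gamma'$. Since conjugation in $\Gamma'$ sends reflections to reflections, $\Gamma^r$ is a normal subgroup of $\Gamma'$, and it contains $\Gamma_{Q^3}$. The quotient $\matH^3/\Gamma^r$ is a Coxeter polyhedron $P$ which tessellates $Q^3$ by $[\Gamma^r:\Gamma_{Q^3}]$ copies, and the finite group $\Gamma'/\Gamma^r$ acts on $P$ by combinatorial symmetries.

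\textbf{Step 3: No finer Coxeter subdivision.} One shows that $\Gamma^r = \Gamma_{Q^3}$, i.e.\ that $Q^3$ admits no proper Coxeter refinement. This is a local check at the vertices of $Q^3$: the horospherical link at each ideal vertex is a Euclidean Coxeter polygon read off the diagram of Figure~\ref{fig:q3-diagram}, and the spherical link at the finite apex is read off analogously; any subdivision of $Q^3$ into Coxeter pieces would induce a Coxeter refinement of each such link, and invoking the classification of spherical and Euclidean Coxeter diagrams (Theorem~\ref{thm:vinberg} and \cite[Ch.~5, \S1, Table 1]{VS}) one verifies case by case that no admissible refinement exists for the links appearing in $Q^3$. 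Hence $P = Q^3$ and $\Gamma^r = \Gamma_{Q^3}$.

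\textbf{Step 4: No symmetries.} The finite quotient $\Gamma'/\Gamma_{Q^3}$ now acts faithfully on $Q^3$ by isometries, and such isometries permute the supporting hyperplanes while preserving dihedral angles; they therefore induce label-preserving automorphisms of the Coxeter diagram of $Q^3$. Inspection of Figure~\ref{fig:q3-diagram} shows that this diagram has no non-trivial automorphism (the configuration of labels, including the $\infty$-edge and the $4$-edge, distinguishes every pair of vertices). Thus $\Gamma'/\Gamma_{Q^3} = \{1\}$, contradicting the properness of the inclusion. This proves maximality.

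The main obstacle is Step 3: the combinatorial check that no Coxeter subdivision of $Q^3$ exists. While this is in principle a finite case analysis based on the classification of Euclidean and spherical Coxeter diagrams, the bookkeeping for a pyramid over a quadrilateral whose links mix ideal and finite vertices is the delicate point; a volume comparison against the minimal co-volume lattice in the commensurability class of $\Gamma_{Q^3}$ could provide a useful cross-check if the direct subdivision analysis becomes cumbersome.
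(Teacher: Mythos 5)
Your proposal takes a genuinely different route from the paper, but Step~3 has a real gap.

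The paper's proof is volume-theoretic: it observes that $Q^3$ is a non-orientable orbifold with a flexible cusp, trivial isometry group, and $\Vol(Q^3)\approx 0.40362$; any proper finite-index overgroup $\Gamma'$ would have index $\geq 3$ (since an index-$2$ overgroup is normal and would give a non-trivial symmetry of $Q^3$), yielding a non-orientable flexible-cusped orbifold of volume $\leq 0.1345$, which Adams' classification of small-volume flexible-cusped $3$-orbifolds forbids. Your proposal instead tries a structural argument: extract the maximal normal reflection subgroup $\Gamma^r\lhd\Gamma'$, argue that its fundamental Coxeter polyhedron $P$ must equal $Q^3$ (``no Coxeter refinement''), and then kill the residual finite quotient by diagram-automorphism rigidity. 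Steps~1, 2 and 4 are correct, though Step~1 is superfluous: any discrete $\Gamma'\geq\Gamma_{Q^3}$ of finite index is automatically a lattice (its covolume is $\Vol(Q^3)/[\Gamma':\Gamma_{Q^3}]<\infty$), so Margulis's commensurator theorem buys you nothing here.

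The gap is in Step~3. You claim that a Coxeter subdivision of $Q^3$ ``would induce a Coxeter refinement of each such link'' and can therefore be excluded by checking the spherical and Euclidean links of the vertices of $Q^3$. This is not a complete argument: a reflecting hyperplane of $\Gamma^r$ that is not a wall of $Q^3$ need not pass through any vertex of $Q^3$. Such a wall would create \emph{new} vertices of $P$ in the interior (or on the open edges and faces) of $Q^3$, leaving the vertex links of $Q^3$ untouched; hence ``no admissible refinement of the links of $Q^3$'' does not rule out $\Gamma^r\supsetneq\Gamma_{Q^3}$. You flag this as ``the delicate point,'' but as written the step is a claim, not a proof, and the suggested volume cross-check is in effect the entire content of the paper's argument (where it is carried out via Adams' theorem rather than as a secondary sanity check). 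To salvage your approach you would need either a genuine combinatorial classification of Coxeter overgroups of $\Gamma_{Q^3}$ (controlling walls that avoid the vertices of $Q^3$), or simply to replace Step~3 by the Adams-type volume estimate -- at which point Steps~2 and~3 become unnecessary and you recover the paper's proof.
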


\begin{proof}
Recall that a cusp of a 3-orbifold $\matH^3/_\Gamma$ is \emph{flexible} if any of its sections covers a Euclidean rectangle.
The pyramid $Q^3=\matH^3/_{\Gamma_{Q^3}}$ is a non-orientable orbifold, and satisfies the following properties:
\begin{enumerate}
\item $\Iso(Q^3)=\{\mathrm{id}\}$, 
\item the cusp of $Q^3$ is flexible, 
\item $\Vol(Q^3)\approx0.40362$.
\end{enumerate}
Property (1) holds because the Coxeter diagram of $\Gamma_{Q^3}$ has no non-trivial symmetry. Property (2) holds because a cusp-section of $Q^3$ is a rectangle. The quantity in (3) is computed, for instance, in \cite[\S 2.3 (2.13)]{GJK}, or can be verified numerically by using Orb \cite{Heard}.

By Adams' \cite[Theorem 4.1]{A}, the volume of an orientable hyperbolic 3-orbifold with a flexible cusp belongs to the set
\begin{equation*}
\{1/4, \sqrt7/8, \sqrt2/4 \}\cup\big(0.3969,+\infty\big).
\end{equation*} 
Since the covering of a flexible cusp is flexible, the volume of a non-orientable hyperbolic 3-orbifold with flexible cusp belongs to the set
\begin{equation*}
\{1/8, \sqrt7/16, \sqrt2/8\}\cup\big(0.19845,+\infty\big).
\end{equation*}

Let us suppose that $\Gamma_{Q^3}$ were not maximal. Then, by (1), we would have $\Gamma_{Q^3}<\Gamma'$ for a lattice $\Gamma'$ with $\left[\Gamma'\colon\Gamma_{Q^3}\right]\geq3$ (otherwise, since index-two subgroups are normal, $Q^3$ would have non-trivial symmetries). Moreover, $\matH^3/_{\Gamma'}$ would be non-orientable, and by (2) its cusp would be flexible.
Then, by (3), we would have
\begin{equation*}
\Vol(\matH^3/_{\Gamma'})=\frac{\Vol(Q^3)}{\left[\Gamma'\colon\Gamma_{Q^3}\right]}\leq\frac{\Vol(Q^3)}{3} \approx 0.1345 < 0.19845.
\end{equation*}

By Adams' theorem, we would actually have that $\left[\Gamma'\colon\Gamma_{Q^3}\right]=3$, because ${\Vol(Q^3)}/4<1/8$.
Again, this is a contradiction, since
\begin{equation*}
0.1345\approx\Vol(\matH^3/_{\Gamma'})\notin\{1/8, \sqrt7/16, \sqrt2/8\}\cup\big(0.19845,+\infty\big),
\end{equation*}
after a simple numerical check. 
\end{proof}

The second observation is that the lattice $\Gamma_{Q^3}$ is a  Gromov -- Piatetsky-Shapiro's hybrid (while we cannot conclude the same for $\Gamma_{Q^4}$):

\begin{fact} \label{fact:gps}
We have
\begin{equation*}
\Gamma_{Q^3}\cong\Lambda_1\ast_\Delta\Lambda_2\quad\mbox{and}\quad\Gamma_{Q^4}\cong\Lambda'_1\ast_{\Delta'}\Lambda'_2,
\end{equation*}
where $\Lambda_i<\Gamma_i$ (resp. $\Lambda'_i<\Gamma'_i$) is generated by reflections through all but one supporting hyperplanes of an arithmetic pyramid with reflection group $\Gamma_i$ (resp. $\Gamma'_i$). Moreover, $\Gamma_1$ and $\Gamma_2$ are incommensurable, while $\Gamma'_1$ and $\Gamma'_2$ are commensurable.
\end{fact}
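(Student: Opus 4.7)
The plan is to realise each of $\Gamma_{Q^3}$ and $\Gamma_{Q^4}$ as a Gromov -- Piatetsky-Shapiro hybrid in the sense of \cite{GPS}. Concretely, I would exhibit a totally geodesic hyperplane $H \subset Q^3$ (respectively $H' \subset Q^4$) that is \emph{perpendicular} to every facet it meets and cuts the polytope into two arithmetic Coxeter pyramids $P_1, P_2$ (resp.\ $P'_1, P'_2$). Given such a perpendicular cut, Bass-Serre theory applied to the tree dual to the $\Gamma_{Q^3}$-orbit of $H$ immediately yields the amalgamated-free-product presentation $\Gamma_{Q^3} \cong \Lambda_1 *_\Delta \Lambda_2$, where $\Delta$ is the reflection group of the slice $H \cap Q^3$ and $\Lambda_i$ is the subgroup of $\Gamma_i = \Gamma_{P_i}$ generated by the reflections in all facets of $P_i$ except $H$; the analogous argument handles $\Gamma_{Q^4}$.

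Locating the hyperplane $H$ (resp.\ $H'$) is a combinatorial problem that can be read off from the Coxeter diagram in Figure~\ref{fig:q3-diagram} (resp.\ Figure~\ref{fig:q4-diagram}), with the Schlegel picture in Figure~\ref{fig:schlegel} as a useful guide for $Q^4$. The natural candidates are common perpendiculars to subsets of mutually orthogonal facets whose sub-diagram is of spherical type, so that Vinberg's Theorem~\ref{thm:vinberg} provides the needed finite vertices for the cut. Once $H$ is fixed, the Coxeter diagram of each $P_i$ is computed by enlarging the Gram matrix of $Q^3$ by the row and column corresponding to $H$, and the arithmeticity of $\Gamma_i$ is then checked via Vinberg's criterion \cite[Ch.~5, Thm.~3.1]{VS} or by matching $P_i$ against the Vinberg/Tumarkin classifications of arithmetic Coxeter pyramids.

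The final step is the comparison of commensurability classes: I would extract the invariant trace field (equivalently, the totally real number field $k$ and the similarity class, over $k$, of the quadratic form defining the ambient arithmetic group) from the Gram matrices of $P_i$ and $P'_i$. For $Q^3$, one expects the two invariants to differ, forcing $\Gamma_1 \not\sim \Gamma_2$ and thereby giving an independent reproof of the non-arithmeticity of $\Gamma_{Q^3}$ via the GPS principle. For $Q^4$, one expects the invariants of $\Gamma'_1$ and $\Gamma'_2$ to coincide, giving commensurability; note that this \emph{does not} allow one to conclude anything about arithmeticity of $\Gamma_{Q^4}$ itself, consistent with the fact that a GPS hybrid of commensurable arithmetic pieces need not be non-arithmetic.

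The main obstacle is bookkeeping: one has to find a single cutting hyperplane whose \emph{both} pieces are simultaneously Coxeter \emph{and} arithmetic -- the perpendicularity condition is restrictive, and a naive slice typically destroys the Coxeter condition on one of the two sides. The invariant-trace-field computation, while routine once the Gram matrices are in hand, involves non-trivial algebraic number theory (in particular, identifying the splitting behaviour of the relevant quadratic forms over $k$ and its finite places), and it is there that the contrast between the $Q^3$ and $Q^4$ cases must ultimately be witnessed.
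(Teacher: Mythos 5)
The paper's ``proof'' of Fact~\ref{fact:gps} is a one-line citation to \cite[Sections 2.2 and 4]{GJK}, and your proposal correctly reconstructs the method employed there: cut each pyramid by a totally geodesic hyperplane meeting the facets it crosses at right angles so that both halves are arithmetic Coxeter pyramids, read off the amalgam $\Lambda_1 *_\Delta \Lambda_2$ from the Bass--Serre tree dual to the resulting wall system, and compare commensurability invariants (invariant trace field together with the similarity class of the defining quadratic form) of the two halves. The only real shortfall is that you leave the two crucial steps -- exhibiting the perpendicular slices for the specific diagrams of Figures~\ref{fig:q3-diagram} and~\ref{fig:q4-diagram}, and carrying out the invariant computation that separates the incommensurable $Q^3$-halves from the commensurable $Q^4$-halves -- as ``bookkeeping''; this is precisely the content of the cited sections of GJK, so your plan matches the paper's intended route but stops where the actual work begins.
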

\begin{proof}
See \cite[Section 2.2 and Section 4]{GJK}.
\end{proof}

\subsection{Two right-angled polytopes} \label{sec:P}

Consider the Coxeter diagram $\mathcal D$ depicted in Figure~\ref{fig:q4-diagram}, which represents the pyramid $Q^4$ introduced in Section \ref{sec:pyramids}. Since the group
\begin{equation*}
\Gamma_v=\langle r_B,r_C,r_D,r_E\rangle<\Gamma_{Q^4}
\end{equation*}
is a spherical reflection group, then $H_B\cap H_C\cap H_D\cap H_E=v\in\matH^4$ is a finite vertex of $Q^4$ by Theorem \ref{thm:vinberg}.

Let us define
\begin{equation*}
P^4=\bigcup_{\gamma\in\Gamma_v}\gamma(Q^4),\quad\mathrm{and}\quad P^3=P^4\cap\matH^3,
\end{equation*}
where the copy of $\matH^3$ inside $\matH^4$ is identified with the hyperplane $H_A$ fixed by the corresponding reflection $r_A$.

\begin{prop} \label{prop:right-angled-na}
The polytopes $P^4\subset\matH^4$ and $P^3\subset\matH^3$ are right-angled, and $P^3$ is a facet of $P^4$. The associated reflection groups $\Gamma_{P^4}<\Iso(\matH^4)$ and $\Gamma_{P^3}<\Iso(\matH^3)$ are non-arithmetic.
\end{prop}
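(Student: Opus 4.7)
The plan is to verify the three assertions---right-angledness of $P^4$ and $P^3$, that $P^3$ is a facet of $P^4$, and non-arithmeticity of the two reflection groups---by reading off angles from the Coxeter diagram in Figure~\ref{fig:q4-diagram} and analysing the unfolding of $Q^4$ around the finite vertex $v$ by the finite group $\Gamma_v$.

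First I would record from $\mathcal{D}$ that $H_A$ is orthogonal to each of $H_B$, $H_C$, $H_D$, $H_E$ (no edge in $\mathcal{D}$ joins the vertex $A$ to any of these four) and that $H_A$ is also orthogonal to $H_F$. Hence $r_A$ commutes with every generator of $\Gamma_v$, so $H_A$ is preserved setwise by $\Gamma_v$. It follows that $P^4 = \bigcup_{\gamma \in \Gamma_v} \gamma(Q^4)$ has as its facets the unique $\Gamma_v$-invariant face on $H_A$ together with the $\Gamma_v$-orbit of $H_F$, and that $P^3 = P^4 \cap H_A = \bigcup_{\gamma \in \Gamma_v} \gamma(Q^3)$ is by construction a facet of $P^4$.

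Next I would check right-angledness. The codimension-two faces of $P^4$ split into two types: faces of the form $H_A \cap \gamma(H_F)$, whose dihedral angle equals the angle between $H_A$ and $H_F$ in $Q^4$ (using that $H_A$ is $\Gamma_v$-invariant) and hence equals $\pi/2$; and faces of the form $\gamma_1(H_F) \cap \gamma_2(H_F)$, whose dihedral angle is governed by the angles that $H_F$ forms with $H_B$, $H_C$, $H_D$, $H_E$ in $Q^4$ together with the action of $\Gamma_v$ on the link of $v$. Inspecting $\mathcal{D}$, one verifies that each such angle in $Q^4$ is either $\pi/2$ (so the corresponding reflection fixes $H_F$ setwise and contributes no new facet) or of a form that, upon iterated reflection through the spherical Coxeter group $\Gamma_v$, produces only right angles between distinct translates of $H_F$. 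An entirely analogous analysis, restricted to $\mathbb{H}^3 = H_A$ and using the finite group $\Gamma_v|_{H_A}$ acting on $Q^3$, shows that $P^3 \subset \mathbb{H}^3$ is also right-angled.

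Finally, for non-arithmeticity, I would use that $P^3$ is tessellated by $|\Gamma_v|$ copies of $Q^3$ meeting at $v$, so $\Gamma_{P^3}$ is a subgroup of finite index in $\Gamma_{Q^3}$; since $\Gamma_{Q^3}$ is non-arithmetic by the preceding proposition of Section~\ref{sec:pyramids}, and non-arithmeticity is a commensurability invariant, $\Gamma_{P^3}$ is non-arithmetic. For $\Gamma_{P^4}$, the hyperplane $H_A$ descends to a properly immersed totally geodesic hypersurface of the orbifold $\mathbb{H}^4/\Gamma_{P^4}$ whose fundamental group is commensurable with $\Gamma_{P^3}$; if $\Gamma_{P^4}$ were arithmetic then all of its totally geodesic codimension-one subgroups would be arithmetic too, forcing $\Gamma_{P^3}$ to be arithmetic---a contradiction. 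The main obstacle is the angle bookkeeping for $P^4$: one must confirm that no non-right dihedral angle is produced at the codimension-two faces where several $\Gamma_v$-translates of $H_F$ meet simultaneously, which amounts to a combinatorial check on the spherical Coxeter group $\Gamma_v$ acting on the link of $v$.
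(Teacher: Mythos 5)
Your proposal has a genuine error in its foundational step. You assert that $H_A$ is orthogonal to each of $H_B, H_C, H_D, H_E$ (and to $H_F$), and you base the rest of your argument on the consequence that $r_A$ commutes with all of $\Gamma_v = \langle r_B, r_C, r_D, r_E\rangle$, so that $H_A$ is $\Gamma_v$-invariant. But this is false: the paper explicitly states (in the proof of Proposition~\ref{prop:combinatorics_P^3}) that ``$\matH^3 = H_A$ is orthogonal to $H_C$, $H_D$ and $H_E$, but \emph{not} to $H_B$.'' Consequently $r_B$ does not preserve $H_A$, and $P^3 = P^4 \cap H_A$ equals $\bigcup_{\gamma \in \Gamma_{v'}} \gamma(Q^3)$ only for the \emph{proper} subgroup $\Gamma_{v'} = \langle r_C, r_D, r_E\rangle \subsetneq \Gamma_v$. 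Your facet description is also wrong in the other direction: $H_F$ is not orthogonal to $H_D$ nor to $H_E$, so $H_F$ is likewise not $\Gamma_v$-invariant, and the facets of $P^4$ are \emph{not} ``$H_A$ together with the $\Gamma_v$-orbit of $H_F$''; both $H_A$ and $H_F$ have nontrivial orbits. Because of this, the dichotomy you set up for the codimension-two faces ($H_A \cap \gamma(H_F)$ versus $\gamma_1(H_F)\cap\gamma_2(H_F)$) is incomplete, and the concluding ``combinatorial check on $\Gamma_v$'' never gets carried out; as written, the right-angledness of $P^4$ is asserted rather than proved.

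The paper's argument is shorter and avoids all this bookkeeping: every $2$-face of $Q^4$ incident to the vertex $v$ becomes interior to $P^4$ (since $v$ is an interior point), every remaining $2$-face carries a dihedral angle $\pi/4$ or $\pi/2$, and each such angle is doubled under the glueing, yielding $\pi/2$ or $\pi$; hence $P^4$ is right-angled. Right-angledness of $P^3$ then follows immediately from the linear-algebra fact that a facet of a right-angled polytope is right-angled, rather than from a separate analysis in $H_A$ (which, in your approach, again relied on the false $\Gamma_v$-invariance of $H_A$). Your non-arithmeticity argument is essentially the same as the paper's and is fine: $\Gamma_{P^3}$ is a finite-index subgroup of the non-arithmetic $\Gamma_{Q^3}$, hence non-arithmetic, and $\Gamma_{P^3}$ embeds into $\Gamma_{P^4}$ as the stabilizer of the totally geodesic hyperplane $H_A$, so $\Gamma_{P^4}$ cannot be arithmetic either.
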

\begin{proof}
Observe that each $2$-face of $Q^4$, or of any of its copies in $P^4$, incident to the vertex $v$ will not be contained in any 2-face of $P^4$.
All the remaining $2$-faces of $Q^4$ carry a dihedral angle of $\pi/4$ or $\pi/2$. Thus, by combining together the copies of $Q^4$ under the action of $\Gamma_v$, each of these angles will be doubled. This implies that the polytope $P^4$ is right-angled. Since $P^3$ is a facet of $P^4$, then $P^3$ is also right-angled, which easily follows by linear algebra.

Since the reflection group $\Gamma_{Q^3}$ is non-arithmetic, so is its finite-index subgroup $\Gamma_{P^3}$. Since $P^3$ is a facet of $P^4$, which is right-angled, then $\Gamma_{P^3}$ embeds into $\Gamma_{P^4}$. Thus the latter is also non-arithmetic. 
\end{proof}

\begin{figure}
\includegraphics[scale=.06]{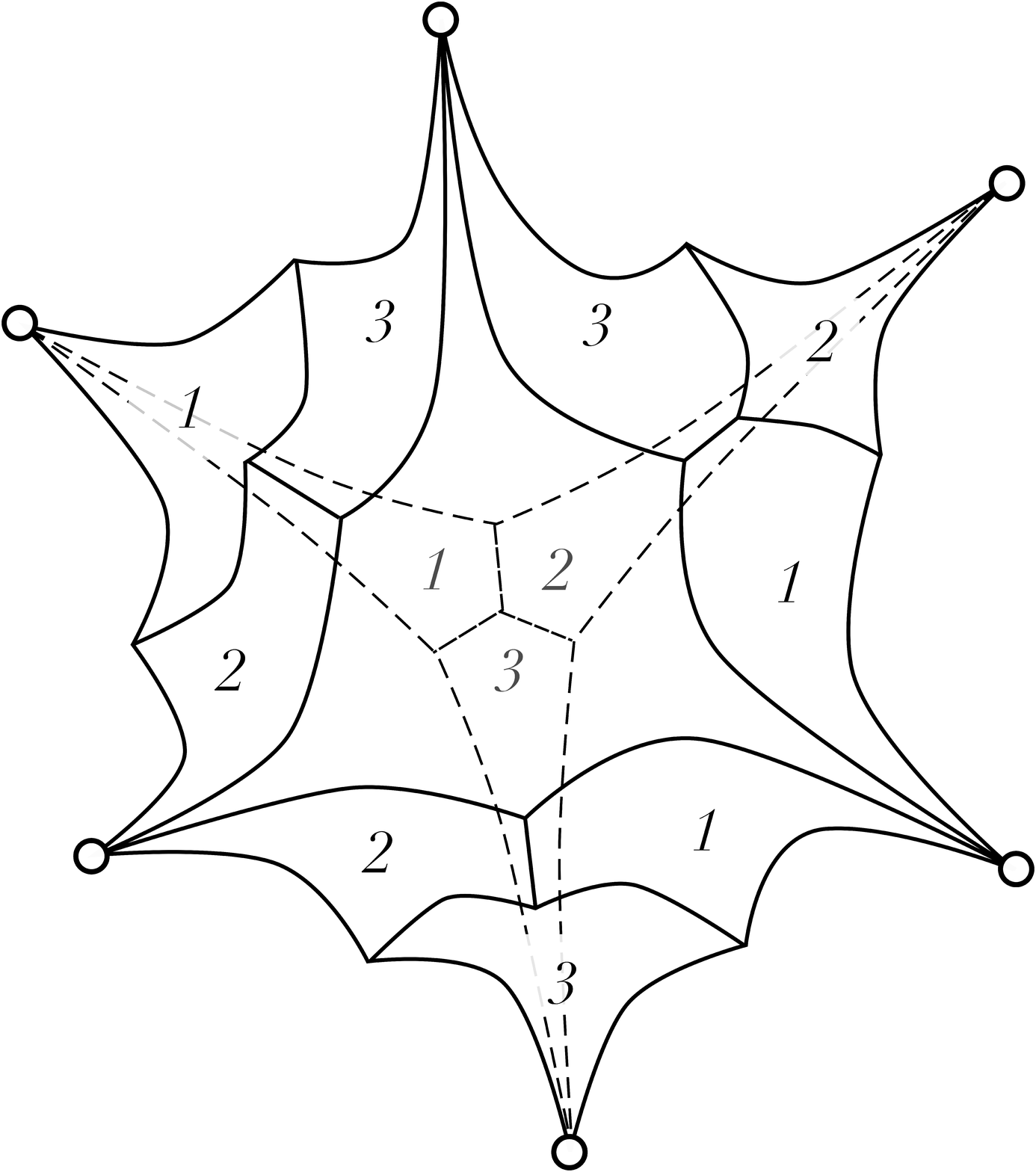}
\caption{\footnotesize The right-angled polyhedron $P^3$. The ideal vertices are represented by white dots. The symmetry group of $P^3$ is that of a regular tetrahedron. The labels $1$, $2$ and $3$ indicate a chosen colouring of the quadrilateral facets of $P^3$ -- see Sections \ref{sec:colouring_techn} and \ref{sec:blocks_na}.}
\label{fig:P^3}
\end{figure}

Let us now describe the combinatorics of the polyhedron $P^3$.

\begin{prop} \label{prop:combinatorics_P^3}
The polyhedron $P^3 \subset \matH^3$ is depicted in Figure \ref{fig:P^3}. In particular, $P^3$ has $4$ pairwise disjoint hexagonal faces (that are pairwise tangent at infinity) and $12$ quadrilateral faces intersecting in triples at its finite vertices. The symmetry group of $P^3$ acts transitively on the set of its hexagonal (resp. quadrilateral) faces. 
\end{prop}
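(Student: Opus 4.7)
The plan is to describe $P^3$ combinatorially by tiling it with copies of $Q^3$ under the finite subgroup of $\Gamma_v = \langle r_B, r_C, r_D, r_E \rangle$ preserving $H_A$. Since $H_A$ is perpendicular to each of $H_C, H_D, H_E$ but merely tangent to $H_B$ at infinity (cf.~the proof of Proposition~\ref{prop:right-angled-na}), the reflections $r_C, r_D, r_E$ preserve $H_A$ while $r_B$ does not. Hence
\[
\Gamma' \;:=\; \mathrm{Stab}_{\Gamma_v}(H_A) \;=\; \langle r_C, r_D, r_E \rangle,
\]
a rank-$3$ finite Coxeter group (spherical as a subdiagram of the one on $\{B,C,D,E\}$ defining the finite vertex $v$), with explicit type read off from Figure~\ref{fig:q4-diagram}. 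It follows that
\[
P^3 \;=\; P^4 \cap H_A \;=\; \bigcup_{\gamma \in \Gamma'} \gamma(Q^3)
\]
is tiled by $|\Gamma'|$ isometric copies of $Q^3$, all meeting at the finite vertex $v' = H_A \cap H_C \cap H_D \cap H_E$ of $Q^3$ that is fixed by $\Gamma'$.

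Next, I classify the $2$-faces of $P^3$ by examining the facets $Q^3 \cap H_X$ of $Q^3$ under the $\Gamma'$-action. Those through $v'$ (with $X \in \{C,D,E\}$) become interior to $P^3$, each being shared between neighbouring copies of $Q^3$. Those not through $v'$ (with $X \in \{B,F\}$) contribute to $\partial P^3$; however, $\Gamma'$-translates lying on a common hyperplane may further glue along shared edges into a single larger facet. The combinatorial task then reduces to computing the orbits $\Gamma' \cdot H_B$ and $\Gamma' \cdot H_F$ and, on each hyperplane in the orbits, counting the coalesced translates it supports.

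Matching the stated structure in Figure~\ref{fig:P^3}, I expect one orbit to yield the $12$ quadrilateral facets (hyperplanes carrying a single quadrilateral translate each, with essentially trivial stabilizer) and the other to yield the $4$ hexagonal facets (hyperplanes carrying a cyclic-order-$3$ stabilizer whose action rotates three triangular translates around a shared ideal vertex and fuses them into a right-angled hexagon). Given this, the pairwise tangency of the four hexagons at infinity will follow from the identification of their ideal vertices as the $\binom{4}{2} = 6$ shared cusps of $P^3$, and the tetrahedral symmetry of $P^3$ will follow from the action of the normalizer of $\Gamma'$ in $\Iso(\matH^3)$, extended by the Coxeter-diagram automorphisms, which permutes the four hexagons as $S_4$ on the vertices of a regular tetrahedron. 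The main obstacle is this orbit-and-coalescence analysis: pinning down the stabilizers of $H_B$ and $H_F$ in $\Gamma'$ and verifying the expected fusion of translates into quadrilateral and hexagonal facets. Once this is settled, the remaining claims (vertex valencies, finite versus ideal vertices, and the symmetry assertion) follow by direct inspection.
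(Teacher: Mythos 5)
Your setup is exactly the paper's: identify $\Gamma' = \mathrm{Stab}_{\Gamma_v}(H_A) = \langle r_C, r_D, r_E\rangle$ (which, from the subdiagram of Figure~\ref{fig:q4-diagram} on $\{C,D,E\}$, is the $A_3$ Coxeter group $\cong\mathfrak{S}_4$), and write $P^3 = \bigcup_{\gamma\in\Gamma'}\gamma(Q^3)$, tiled by $|\mathfrak{S}_4| = 24$ copies of the pyramid $Q^3$ meeting at the finite vertex $v'$. So far, so good, and this matches the paper.

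However, the orbit-and-coalescence analysis that you explicitly flag as ``the main obstacle'' is the actual content of the proof, and your conjectured answers for the stabilizers are incorrect. For $H_B$: since $H_B \perp H_D$ and $H_B \perp H_E$ but $H_B \not\perp H_C$, the stabilizer of $H_B$ in $\Gamma'$ is $\langle r_D, r_E\rangle$, which (the label on $D$--$E$ being $3$) is dihedral of order $6$, not cyclic of order $3$. Thus $P^3\cap H_B$ is tiled by \emph{six} copies of the triangle $Q^3\cap H_B$, fusing into a right-angled hexagon with alternating finite and ideal vertices; the count is $[\Gamma' : \langle r_D, r_E\rangle] = 24/6 = 4$ hexagons. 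For $H_F$: since $H_F\perp H_C$ but $H_F\not\perp H_D, H_E$, the stabilizer is $\langle r_C\rangle$ of order $2$ -- \emph{not} trivial -- and $P^3\cap H_F = (Q^3\cup r_C(Q^3))\cap H_F$ is the union of \emph{two} triangles fused into a quadrilateral (note $Q^3\cap H_F$ is itself a triangle, not a quadrilateral as your ``single quadrilateral translate'' suggests); the count is $[\Gamma' : \langle r_C\rangle] = 24/2 = 12$. The claim that three quadrilateral faces meet at each finite vertex away from $v'$ then comes from $[\langle r_C, r_D\rangle : \langle r_C\rangle] = 3$ and $[\Gamma' : \langle r_C, r_D\rangle] = 4$, which you do not address. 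Finally, the paper's argument for $\mathrm{Isom}(P^3)\cong\mathfrak{S}_4$ is more direct than your normalizer-plus-diagram-automorphism sketch: since $Q^3$ has \emph{trivial} symmetry group (its Coxeter diagram admits no nontrivial automorphism), the symmetry group of the tiled polyhedron $P^3$ is precisely the tiling group $\Gamma'\cong\mathfrak{S}_4$, and transitivity on each face type is then immediate from the single-orbit computations above. In short: the framework is right, but the stabilizer computations are both wrong, and they are the proof.
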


\begin{proof}
By an argument analogous to that in the beginning of this section (this time by looking at the Coxeter diagram in Figure~\ref{fig:q3-diagram}), we obtain that $v'=\matH^3\cap H_C\cap H_D\cap H_E$ is a finite vertex of the pyramid $Q^3$. The group
\begin{equation*}
\Gamma_{v'}=\langle r_C,r_D,r_E\rangle
\end{equation*}
is isomorphic to the symmetric group $\mathfrak S_4$. 

Since $\matH^3=H_A$ is orthogonal to $H_C$, $H_D$ and $H_E$, but \textit{not} to $H_B$, we have that
\begin{equation*}
P^3=\bigcup_{\gamma\in\Gamma_{v'}}\gamma(Q^3).
\end{equation*}

Note that $Q^3$ has trivial symmetry group, so that the symmetry group of $P^3$ is exactly $\Gamma_{v'}\cong\mathfrak S_4$. In other words, the polyhedron $P^3$ has tetrahedral symmetry.

We proceed to studying the faces of $P^3$. By construction,  there are only two types of faces of $P^3$ up to isometry: $P^3\cap H_B$ and $P^3\cap H_F$.

The face $Q^3\cap H_B$ of the pyramid $Q^3$ is a triangle. Since $H_B$ is orthogonal to $H_D$ and $H_E$, but \textit{not} to $H_C$, we obtain
\begin{equation*}
P^3\cap H_B=\bigcup_{\gamma\in\langle r_D, r_E\rangle}\gamma(Q^3)\cap H_B.
\end{equation*}
The group $\langle r_D, r_E\rangle$ is dihedral of order $6$. Moreover, the plane $H_F\cap\matH^3$ intersects $H_D\cap\matH^3$ non-orthogonally, and is tangent at infinity to $H_E\cap\matH^3$. Thus, the face $P^3\cap H_B$ is a hexagon with alternating ideal and finite vertices.
There are $\left[\langle r_C, r_D, r_E\rangle:\langle r_D, r_E\rangle\right]=4$ such hexagons in total, and they are adjacent at ideal vertices only.

The face $Q^3\cap H_F$ of the pyramid $Q^3$ is also a triangle. Since $H_F$ is orthogonal to $H_C$, but is \textit{neither} orthogonal to $H_D$ \textit{nor} to $H_E$, we have that
\begin{equation*}
P^3\cap H_F=(Q^3\cup r_C(Q^3))\cap H_F.
\end{equation*}
Since the plane $H_C\cap\matH^3$ intersects $H_D\cap\matH^3$ non-orthogonally and is tangent at infinity to $H_B\cap\matH^3$, the face $P^3\cap H_F$ is a quadrilateral with one ideal vertex.

Finally, the group $\langle r_C, r_D\rangle$ is dihedral of order $6$, so that we have $\left[\langle r_C,r_D\rangle:\langle r_C\rangle\right]=3$ such quadrilateral faces of $P^3$ that meet at a finite vertex of $P^3$. Moreover, there are $\left[\langle r_C, r_D, r_E\rangle:\langle r_C, r_D\rangle\right]$ $=4$ such vertices, and thus $3\cdot 4=12$ quadrilateral faces in total.

All the information we collected is now sufficient to draw Figure \ref{fig:P^3}.
\end{proof}

\subsection{The colouring technique} \label{sec:colouring_techn}
Let us briefly recall a standard technique to produce hyperbolic ma\-ni\-folds from right-angled polytopes, and some of its straightforward generalisations. For more details, we refer the reader to \cite[Sections 1.1 -- 1.2]{M}.

Given a right-angled polytope $P\subset\matH^n$ of finite volume, we assign surjectively to each facet a colour in a set $\{c_1,\ldots,c_k\}$, so that any two intersecting facets have distinct colours (however, if two facets are adjacent only at an ideal vertex, they are allowed to have the same colour).  

Then we take $2^k$ (coloured) copies of $P$, say $P^{\epsilon_1,\ldots,\epsilon_k}$ with $(\epsilon_1,\ldots,\epsilon_k)\in\{0,1\}^k$, and glue through the map induced by the identity each facet of $P^{\epsilon_1,\ldots,\epsilon_k}$ with colour $c_i$ to the corresponding facet of $P^{{\epsilon}_1',\ldots,{\epsilon}_k'}$, for all $(\epsilon_1,\ldots,\epsilon_k)$ and $(\epsilon_1',\ldots,\epsilon_k')$ which differ only in their $i$-th entry.

The resulting metric space is a hyperbolic manifold $M=\matH^n/_\Gamma$.  The group $\Gamma$ is the kernel of an epimorphism $\Gamma_P\to\left(\matZ/_{2\matZ}\right)^k$ defined by the colouring, where $\Gamma_P$ is the reflection group associated with $P$.

It is possible to generalise this construction in more than one way. E.g., if some pairwise disjoint facets of $P$ are left uncoloured, then $M$ is a hyperbolic manifold with totally geodesic boundary. Moreover, one can colour the components of $\partial M$ and apply an analogous procedure to $M$ itself.
Exactly this procedure was performed in Section \ref{sec:blocks_a} to construct the arithmetic blocks $B'$ and $B$.

Let us further generalise the colouring technique. Namely, if some uncoloured facets of $P$ intersect, then $M$ is a \emph{hyperbolic manifold with right-angled corners}, that is, a manifold locally modelled on an orthant of $\matH^n$.  Same as for right-angled polytopes, the boundary of a hyperbolic manifold with right-angled corners is naturally decomposed into \emph{facets}, which in their own turn are hyperbolic manifolds with right-angled corners.
Then the colouring technique can be applied, c.f. see \cite[Proposition 6]{M}.

\subsection{The block} \label{sec:blocks_na}

Let us assign three distinct colours $c_1$, $c_2$ and $c_3$ to the quadrilateral faces of the polyhedron $P^3$, as indicated in Figure \ref{fig:P^3}. We call $B'$ the resulting space obtained by applying the above colouring technique to $P^3$.

The glueing graph of $B'$ is the cubical graph depicted in Figure \ref{fig:cubical_graph}. Each vertex of this graph represents a copy of the polyhedron $P^3$, and two vertices are joined by an edge labelled $j\in\{1,2,3\}$ if the four quadrilateral faces coloured $c_j$ in a copy of $P^3$ are identified with those of the other copy.

\begin{lemma}
The space $B'$ is a hyperbolic manifold with totally geodesic boundary that consists of four $12$-punctured spheres. The symmetry group of $B'$ acts transitively on the set of its boundary components.
\end{lemma}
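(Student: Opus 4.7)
My plan is to follow the colouring technique introduced in Section \ref{sec:colouring_techn} and then analyse the resulting boundary directly.

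First, since the hexagonal facets of $P^3$ are pairwise disjoint in $\matH^3$ -- only tangent at infinity by Proposition \ref{prop:combinatorics_P^3} -- the generalisation of the colouring construction allowing pairwise-disjoint uncoloured facets produces a hyperbolic manifold with totally geodesic boundary and no corners. Explicitly, $B' = \matH^3/N$ where $N$ is the kernel of the epimorphism $\Gamma_{P^3} \to (\matZ/2\matZ)^3$ sending each reflection across a quadrilateral facet to the generator indexed by its colour. So $B'$ is an 8-fold manifold cover of the reflection orbifold $P^3$, and $\partial B'$ is the union of the lifts of the four hexagonal facets, giving $4 \cdot 8 = 32$ geodesic hexagonal pieces (since each non-mirror hexagonal facet has trivial stabiliser in $(\matZ/2\matZ)^3$).

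Second, to count the connected components of $\partial B'$ I would trace how hexagonal lifts are identified along the quadrilateral gluings. Each edge of a hexagonal face $H$ of $P^3$ is shared with exactly one quadrilateral facet (since the four hexagons are mutually disjoint in $\matH^3$), so the identification along a quadrilateral coloured $c_i$ glues a lift of $H$ in one copy of $P^3$ to a lift of the \emph{same} hexagon $H$ in the neighbouring copy (differing in the $i$-th coordinate of the deck group). Hence each of the four hexagons of $P^3$ gives rise to exactly one connected component of $\partial B'$, and the $8$ lifts of a single hexagon form that component.

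Third, for the topology of a component I would compute its Euler characteristic combinatorially and count its cusps via cusp stabilizers. Each hexagonal face of $P^3$ is a right-angled hyperbolic hexagon with $3$ alternating ideal vertices and three right angles (area $5\pi/2$). A straightforward bookkeeping at each of the two types of finite vertices of $P^3$ yields $8$ hexagons, $24$ edges and $6$ finite vertices per component, hence Euler characteristic $-10$. For cusps, the two quadrilateral facets of $P^3$ incident to any given ideal vertex bear the same colour (a property of the $\mathfrak S_4$-equivariant coloring indicated in Figure \ref{fig:P^3}), so the cusp stabilizer surjects onto a $\matZ/2\matZ$ subgroup of $(\matZ/2\matZ)^3$; each of the six ideal vertices of $P^3$ lifts to $4$ cusps of $B'$, each meeting $\partial B'$ in two circles, and the $6 \cdot 4 \cdot 2 = 48$ cusps distribute evenly to give $12$ per boundary component. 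Orientability of $B'$ (and hence $\partial B'$) follows because every element of $N$ is a product of an even number of reflections. A connected orientable surface with $\chi = -10$ and $12$ punctures is a $12$-punctured sphere.

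Finally, for transitivity of $\Iso(B')$ on the boundary components, the tetrahedral symmetry group $\mathfrak S_4$ of $P^3$ permutes the three colour classes via the natural surjection $\mathfrak S_4 \to \mathfrak S_3$, so each of its elements lifts to an isometry of $B'$; since $\mathfrak S_4$ already acts transitively on the four hexagonal facets of $P^3$, it acts transitively on the four boundary components of $B'$. The main technical obstacle in executing this plan is the combinatorial bookkeeping in the connected-component and vertex/cusp-count arguments; the rest follows directly once Proposition \ref{prop:combinatorics_P^3} is in hand.
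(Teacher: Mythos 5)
Your argument reaches the correct conclusion and follows essentially the same blueprint as the paper (colouring technique, then combinatorics of the $8$-fold cover), but it is noticeably more computational where the paper is more synthetic. The paper's own proof identifies the glueing graph of each boundary component with the cubical graph of Figure \ref{fig:cubical_graph} (vertices $\leftrightarrow$ the $8$ lifts of a fixed hexagon $H$, edges $\leftrightarrow$ punctures) and concludes directly; you instead compute the Euler characteristic of each component (getting $\chi=-10$ from $6$ vertices, $24$ edges, $8$ faces) and count punctures separately. Both are valid, and both quietly depend on the same feature of the colouring of Figure \ref{fig:P^3}: that the two quadrilateral facets meeting at a given ideal vertex of $P^3$ carry the \emph{same} colour, equivalently that the two edges of $H$ of a given colour meet at an ideal vertex of $H$. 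You state this explicitly, which is a merit: if those two quadrilaterals carried different colours, the holonomy around a lift of that ideal vertex would be a $4$-cycle rather than a $2$-cycle, giving only $6$ punctures per component and genus $3$ instead of a $12$-punctured sphere. The paper leaves this to the figure.

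Two small imprecisions worth fixing. First, writing $B'=\matH^3/N$ with $N=\ker\left(\Gamma_{P^3}\to(\matZ/2\matZ)^3\right)$ is not quite right: since the hexagonal facets are uncoloured, $N$ still contains the reflections in them, so $\matH^3/N$ is an orbifold with mirror boundary; $B'$ is the underlying manifold-with-boundary, or equivalently the quotient of a convex domain by the subgroup generated by the quadrilateral reflections only. Second, the phrase ``bookkeeping at each of the two types of finite vertices of $P^3$'' is misleading: only the finite vertices lying on hexagonal faces (the ``hex--quad--quad'' ones) contribute to $\partial B'$; the four ``quad-only'' vertices lie in the interior of $B'$ and are irrelevant to the boundary surface. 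The numbers you quote ($8$ hexagons, $24$ edges, $6$ finite vertices per component) are nevertheless correct.
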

\begin{proof}
The hexagonal faces of $P^3$ are the only uncoloured faces, and they are pairwise disjoint, so that $B'$ has totally geodesic boundary. By Proposition \ref{prop:combinatorics_P^3}, the symmetry group of $B'$ acts transitively on the set of its boundary components.

For each hexagonal face $H$ of the polyhedron $P^3$, there is a boundary component of $B'$. Indeed,
the $3$-colouring of $P^3$ induces a $3$-colouring of $H$, in which all the three colours are used. The glueing graph of each boundary component of $B'$ is again the cubical graph from Figure \ref{fig:cubical_graph}: now the vertices of the graph represent copies of $H$, and we readily observe that each boundary component of $B'$ is a $12$-punctured sphere (the punctures correspond to the edges of the cubical graph).
\end{proof}

\begin{defn} \label{def:block_na}
We assign three distinct colours to three of the four boundary components of $B'$. We call the hyperbolic $3$-manifold $B$ with totally geodesic boundary obtained by applying the colouring technique to $B'$ \textit{the block}.
\end{defn}
The glueing graph of $B$ is once again the cubical graph from Figure \ref{fig:cubical_graph}, where now the vertices of the graph represent copies of $B'$.

\begin{lemma} \label{lem:involution_Bna}
The block $B$ has $8$ mutually isometric boundary components $C_1,\ldots,C_4$, $C'_1,\ldots,C'_4$, and admits an orientation-reversing  fixed point free involution $\iota\in \mathrm{Isom}(B)$, such that $\iota(C_j)=C'_j$ for all $j\in\{1,2,3,4\}$.
\end{lemma}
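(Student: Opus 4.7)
The statement is formally identical to Lemma~\ref{lem:involution_Ba}, and my plan is to mimic the arithmetic argument essentially verbatim, since the glueing graph of the non-arithmetic block $B$ is again the cubical graph of Figure~\ref{fig:cubical_graph}. First, I would identify this graph with the $1$-skeleton of the cube $Q = [-1,1]^3 \subset \matR^3$, so that the eight copies of $B'$ in $B$ correspond to the vertices of $Q$ and the edges of $Q$ are labelled by the three colours $c_1, c_2, c_3$ along which the identifications are performed. The antipodal map $a = -\mathrm{id} \in \mathrm{Isom}(Q)$ exchanges pairs of opposite vertices and therefore induces a candidate involution $\iota \in \mathrm{Isom}(B)$; this is a genuine isometry because the coloured boundary components of $B'$ are mutually isometric, by the transitivity statement proved just above for $B'$.

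The orientation-reversing property and fixed-point-freeness would follow by the same trick as in the arithmetic case. Writing $a = r_1 r_2 r_3$ as the product of the three coordinate reflections $r_i \colon x_i \mapsto -x_i$, each $r_i$ swaps the two copies of $B'$ meeting along any colour-$c_i$ glueing and acts on $B$ as a reflection through the totally geodesic surface formed by those glueings; hence each $r_i$ is orientation-reversing, and so is $\iota$. For fixed-point-freeness, I would note that any vertex $v$ of $Q$ and its antipode $a(v)$ lie at graph distance $3$, so the corresponding copies of $B'$ share no face and are therefore disjoint in $B$; since $\iota$ permutes copies via the antipodal map, it has no fixed point. The identifications $\iota(C_j) = C'_j$ then come for free, since each copy of $B'$ contributes exactly one uncoloured boundary component to $B$, producing the eight mutually isometric components paired off by the antipodal pairing of vertices of $Q$.

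I do not foresee a genuinely new obstacle compared with the arithmetic case. The only check worth singling out is that the $r_i$'s are honest isometries of the assembled manifold $B$, rather than just combinatorial self-maps of the glueing pattern; this is immediate from the $(\matZ/2\matZ)^3$-symmetry built into the colouring technique of Section~\ref{sec:colouring_techn}, applied here to the three colours on the boundary components of $B'$.
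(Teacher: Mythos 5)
Your proposal is correct and reproduces essentially the same argument as the paper, which itself simply says ``As in Lemma~\ref{lem:involution_Ba}, the antipodal symmetry of the cube induces such an involution.'' You have unpacked that citation faithfully: the identification of the glueing graph with the $1$-skeleton of $Q=[-1,1]^3$, the factorisation $a = r_1 r_2 r_3$ to show $\iota$ is orientation-reversing, the observation that antipodal copies of $B'$ are disjoint to get fixed-point-freeness, and the pairing of the eight uncoloured boundary components under the antipodal map. Your final remark, that the $r_i$'s are genuine isometries (not merely combinatorial symmetries) because the colouring technique builds in a $(\matZ/2\matZ)^3$-action and all glueings are induced by the identity, is exactly the subtle point that makes the lift from $\mathrm{Isom}(Q)$ to $\mathrm{Isom}(B)$ legitimate.
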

\begin{proof}
As in Lemma \ref{lem:involution_Ba}, the antipodal symmetry of the cube induces such an involution $\iota\in\mathrm{Isom}(B)$.
\end{proof}

\subsection{Graphs and manifolds} \label{sec:graphs_manifolds_na}

Exactly as in Section \ref{sec:graphs_manifolds_a}, given a $4$-factor $G$ (see Definition \ref{def:graph}), we build a hyperbolic $3$-manifold $M_G$: this time by taking the block $B$ of Definition \ref{def:block_na}.

\begin{rem} \label{rem:nonarithm}
By construction, there are orbifold coverings $M_G\to B\to P^3\to Q^3$ of finite degree. In particular, the manifold $M_G$ is non-arithmetic.
\end{rem}

The following is an analogue of Proposition \ref{prop:bound_a}. The argument to prove it is very similar to Martelli's \cite[Theorem 3, Corollary 10]{M}, where instead of $P^3$ and $P^4$ one has the ideal regular octahedron and the ideal regular $24$-cell, respectively.

\begin{prop} \label{prop:bound_na}
The hyperbolic $3$-manifold $M_G$ bounds geometrically a hyperbolic $4$-manifold $W_G$, such that
\begin{equation*}
\Vol(W_G)= K\cdot \Vol(M_G),
\end{equation*}
where $K>0$ does not depend on $G$.
\end{prop}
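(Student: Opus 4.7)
The strategy is to adapt Martelli's construction in \cite[Theorem 3, Corollary 10]{M}, with the ideal regular octahedron and the ideal regular $24$-cell replaced by our right-angled polyhedra $P^3$ and $P^4$. The geometric prerequisites are already in place: $M_G$ is tessellated by copies of $P^3$ (as a consequence of the colouring constructions of Sections \ref{sec:blocks_na} and \ref{sec:graphs_manifolds_na}), and $P^3$ is a facet of the finite-volume right-angled polytope $P^4\subset\matH^4$ by Proposition \ref{prop:right-angled-na}.

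The first step is to promote the involution $\iota\in\mathrm{Isom}(B)$ of Lemma \ref{lem:involution_Bna} to an orientation-reversing, fixed-point-free isometric involution $\iota_G\in\mathrm{Isom}(M_G)$. The extension is well-defined block by block: since the glueing prescribed by the factor $G$ identifies $C_j^v$ with $C_j^w$ and ${C'_j}^v$ with ${C'_j}^w$ whenever $\{v,w\}$ is an edge of $G$ with colour $j$, it commutes with the action of $\iota$ on each block, which swaps $C_j$ with $C'_j$. That $\iota_G$ inherits the orientation-reversing and fixed-point-free properties is immediate.

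The second step is Martelli's gluing recipe. For each copy of $P^3$ in the tessellation of $M_G$, attach one copy of $P^4$ having that $P^3$ as a designated boundary facet. The remaining facets of $P^4$ must then be identified in pairs to produce a hyperbolic manifold: the right-angled condition on $P^4$ guarantees that whenever two such facets meet a common $2$-face on $\partial W_G$, their dihedral angles sum correctly and a smooth hyperbolic structure results. The required pairing is prescribed by $\iota_G$, playing the role that the antipodal involution of the $24$-cell plays in Martelli's original construction: the two copies of $P^4$ sitting over a pair of $\iota_G$-identified copies of $P^3$ are identified along their remaining facets in a manner encoding $\iota_G$. Because $\iota_G$ is orientation-reversing and fixed-point-free, $W_G$ is orientable and its totally geodesic boundary is precisely \emph{one} copy of $M_G$, not two disjoint copies.

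For the volume, this construction establishes a bijection between copies of $P^4$ in $W_G$ and copies of $P^3$ in $M_G$, hence
\[
\Vol(W_G)=\bigl(\#\,P^4\text{-cells in }W_G\bigr)\cdot\Vol(P^4)=\bigl(\#\,P^3\text{-cells in }M_G\bigr)\cdot\Vol(P^4)=\frac{\Vol(P^4)}{\Vol(P^3)}\cdot\Vol(M_G),
\]
so $K=\Vol(P^4)/\Vol(P^3)>0$ is independent of $G$. This exact equality, sharpening the general upper bound from Martelli's construction, is exactly what is needed later to prove the volume bound (7) of Theorem \ref{thm:main} and the corollaries on $D(W_G)$. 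The main obstacle is the rigorous verification that the identifications on the non-$P^3$ facets of the $P^4$'s are globally consistent and yield a smooth hyperbolic manifold with boundary exactly $M_G$; this is where the right-angled structure of $P^4$ and the very particular form of $\iota_G$ (built from the antipodal symmetry of the cubical glueing graph of Figure \ref{fig:cubical_graph}) become indispensable, exactly as in Martelli's original argument.
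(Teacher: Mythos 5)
Your proposal correctly identifies the overall strategy (attach a copy of $P^4$ above each copy of $P^3$ in the tessellation of $M_G$ and exploit the right-angled structure, with the involution $\iota_G$ extended block-by-block as you describe), but the way you propose to handle the ``top'' facets of $P^4$ — those that do not meet $P^3$ — is not what the paper does, and as stated it would not produce a hyperbolic manifold. You propose gluing the top facets of the $P^4$ over $P^3_i$ to those of the $P^4$ over $\iota_G(P^3_i)$, giving a bijection between $P^4$-cells of $W_G$ and $P^3$-cells of $M_G$. But consider a $2$-face of $P^4$ where two top facets meet at a right angle: after your gluing, only two copies of $P^4$ would surround that $2$-face, giving a total dihedral angle of $\pi$ rather than $2\pi$, i.e.\ a cone singularity. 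The paper avoids this by applying the colouring technique once more to the top facets, thereby taking $2^N$ copies of an intermediate space $W'_G$ (where $N$ is the number of top facets of the building block $B^4$) to obtain a genuine hyperbolic $4$-manifold $W''_G$ with totally geodesic boundary a disjoint union of many copies of $M_G$. Only at that final stage is $\iota_G$ used — not to pair $P^4$-cells, but to identify all but one boundary component of $W''_G$ to themselves, leaving exactly one copy of $M_G$ as boundary.

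Consequently your claimed volume constant $K=\Vol(P^4)/\Vol(P^3)$ is not the one obtained in the paper; the paper's constant is $K=2^N\,\Vol(B^4)/\Vol(B^3)$, strictly larger due to the $2^N$-fold doubling and because the blocks $B^4$ and $B^3$ do not contain the same number of copies of $P^4$ and $P^3$. Since the proposition only asserts \emph{some} constant $K>0$ independent of $G$, a different constant would be acceptable — but this discrepancy signals the underlying problem, which is that the bijective gluing you describe is not a valid construction. You acknowledge at the end that verifying the identifications ``yield a smooth hyperbolic manifold'' is the crux, and that is exactly where the argument breaks: the resolution is the colouring technique applied to the top facets, not a direct pairing induced by $\iota_G$, and this is what your proof is missing.
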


\begin{proof}
Observe that the $3$-manifold $M_G$ is tessellated by copies of the polyhedron $P^3$, which is a facet of the right-angled polytope $P^4$.
Since all the glueing maps are induced by the identity, we can naturally place a copy of $P^4$ ``above'' each copy of $P^3$, to get a hyperbolic $4$-manifold with right-angled corners $W'_G$ (c.f. Section \ref{sec:colouring_techn}).

More precisely, the space $W'_G$ is obtained as follows. Consider the $12$ facets of $P^4$ that intersect $P^3$ in its quadrilateral faces. By colouring them with $3$ colours as depicted in Figure \ref{fig:P^3}, and by applying the colouring technique to $P^4$ (where $P^3$ remains uncoloured, and the neighbouring facets of $P^4$ acquire the colours induced from the respective faces of $P^3$), we obtain a hyperbolic $4$-manifold with right-angled corners $B^4$.
The facets of $B^4$ are partitioned into $3$ sets:
\begin{itemize}
\item the ``bottom'' facet $B^3$ (that is, the block $B$)
\item the facets intersecting $B^3$, called \emph{vertical} facets, and
\item the remaining facets, called \emph{top} facets.
\end{itemize}
Then, $W'_G$ is obtained by glueing copies of $B^4$ along the vertical facets through the identity map as prescribed by the graph $G$. One connected component of $\partial W'_G$ is isometric to $M_G$, and (being totally geodesic) is a facet of $W'_G$.

Let us now consider the remaining facets of $W'_G$. Since $W'_G$ is obtained by glueing copies of $B^4$ along vertical facets through the identity map, we can colour the remaining facets of $W'_G$ with $N$ colours (distinct from the ones already used), where $N$ is the number of top facets of $B^4$. By applying the colouring technique to $W'_G$, we get a hyperbolic $4$-manifold $W''_G$ with totally geodesic boundary $\partial W''_G$ isometric to many disjoint copies of $M_G$. The manifold $W''_G$ is tessellated by $2^N$ copies of $W'_G$.

Observe that $M_G$ has an orientation-reversing fixed point free  isometric involution $\iota_G$. Indeed, $\iota_G$ is obtained by applying the involution $\iota\in \mathrm{Isom}(B)$ from Lemma \ref{lem:involution_Bna} to each copy  of the block $B$ in $M_G$. The map $\iota_G$ is well-defined, since all glueing maps between the copies of $B$ in $M_G$ are induced by the identity. We quotient all but one boundary components of $W''_G$ by the involution $\iota_G$, to get the desired $W_G$.

Finally, we compute
\begin{equation*}
\Vol(W_G)=2^N\cdot\Vol(W'_G)=2^{N}\cdot\sharp\{\mathrm{vertices\ of\ }G\}\cdot\Vol(B^4) = 
\end{equation*}
\begin{equation*}
=\frac{2^N\cdot\Vol(B^4)}{\Vol(B^3)}\Vol(M_G) = K\, \Vol(M_G).
\end{equation*}
\end{proof}

\begin{rem} \label{rem:non-arithm.4d}
Similarly to Remark \ref{rem:arithm4d}, we have that the double $D(W_G)$ is non-arithmetic. Indeed, by construction, $D(W_G)$ finitely covers the non-arithmetic orbifold $Q^4\cong P^4/_{\Iso(P^4)}$ of Section \ref{sec:pyramids}.
\end{rem}

Since the polyhedron $P^3$ is not ideal, we cannot apply the argument of Proposition \ref{prop:distinguish_a} in order to distinguish manifolds by distinguishing their glueing graphs. We shall rather exploit the non-arithmeticity of $P^3$.

The following is a consequence of a classical result by Margulis \cite{Margulis} about commensurators of lattices, stated here in a way that will be convenient for us (c.f. \cite[Theorem 10.3.5]{MaRe} and the discussion in \cite{NR}):

\begin{thm}[Margulis]\label{thm:margulis}
Let $\Gamma<\mathrm{PO}(n,1)$, $n\geq3$, be a non-arithmetic lattice. Then there is a unique maximal lattice $\Gamma_{\mathrm{max}}$ containing $\Gamma$. Moreover, every subgroup $\Gamma'<\Gamma_\mathrm{max}$ isomorphic to $\Gamma$ is conjugate to $\Gamma$ in $\Gamma_\mathrm{max}$.
\end{thm}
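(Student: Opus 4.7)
The plan is to deduce both assertions from two classical inputs: Margulis's commensurator theorem (the main black box) and Mostow--Prasad rigidity. Throughout, denote by
\begin{equation*}
\mathrm{Comm}(\Gamma)=\{g\in\mathrm{PO}(n,1)\ |\ \Gamma\cap g\Gamma g^{-1}\ \mbox{has finite index in both}\ \Gamma\ \mbox{and}\ g\Gamma g^{-1}\}
\end{equation*}
the commensurator of $\Gamma$ in $\mathrm{PO}(n,1)$. Since for $n\geq3$ the group $\mathrm{PO}(n,1)$ is a simple Lie group of rank one without compact factors, Margulis's theorem asserts that when $\Gamma$ is non-arithmetic, $\mathrm{Comm}(\Gamma)$ is itself a lattice in $\mathrm{PO}(n,1)$ (in contrast with the arithmetic case, where it is dense). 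This is the deep input, which I would invoke as a given.

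Granted this, the existence and uniqueness of $\Gamma_{\mathrm{max}}$ follow immediately. Indeed, any lattice $\Gamma''<\mathrm{PO}(n,1)$ containing $\Gamma$ must have $\Gamma$ as a finite-index subgroup, since both have finite co-volume in the same ambient group; in particular $\Gamma$ and $\Gamma''$ are commensurable, so $\Gamma''\subset\mathrm{Comm}(\Gamma)$. Setting $\Gamma_{\mathrm{max}}:=\mathrm{Comm}(\Gamma)$ then produces the unique maximal lattice containing $\Gamma$.

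For the conjugacy statement, let $\Gamma'<\Gamma_{\mathrm{max}}$ be abstractly isomorphic to $\Gamma$. I would first verify that $\Gamma'$ has finite index in $\Gamma_{\mathrm{max}}$: being abstractly isomorphic to the lattice $\Gamma$, the subgroup $\Gamma'$ is itself a discrete subgroup of finite co-volume inside the lattice $\Gamma_{\mathrm{max}}$, hence $[\Gamma_{\mathrm{max}}:\Gamma']<\infty$. Next, by Mostow--Prasad rigidity (valid in dimension $n\geq3$), any abstract isomorphism $\Gamma\to\Gamma'$ between lattices in $\mathrm{PO}(n,1)$ is realised by conjugation by some $g\in\mathrm{PO}(n,1)$, so that $g\Gamma g^{-1}=\Gamma'$. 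Since $\Gamma$ and $\Gamma'$ are now both finite-index subgroups of the same lattice $\Gamma_{\mathrm{max}}$, they are commensurable, whence $g\in\mathrm{Comm}(\Gamma)=\Gamma_{\mathrm{max}}$, as required.

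The main obstacle, of course, is Margulis's commensurator theorem itself, the proof of which relies on the full strength of his superrigidity/arithmeticity machinery adapted to the rank one setting. Once it is accepted as a black box, the remaining argument is essentially bookkeeping combining the definition of $\mathrm{Comm}(\Gamma)$ with Mostow--Prasad rigidity, and the only mildly delicate point is the verification that subgroups of $\Gamma_{\mathrm{max}}$ abstractly isomorphic to $\Gamma$ are automatically of finite index.
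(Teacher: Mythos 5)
The paper does not actually prove this statement: it is presented as a citation, ``stated here in a way that will be convenient'' (c.f.\ \cite[Theorem 10.3.5]{MaRe} and the discussion in \cite{NR}). So there is no paper proof to compare against. Your overall route -- Margulis's commensurator dichotomy giving $\Gamma_{\mathrm{max}}=\mathrm{Comm}(\Gamma)$, then Mostow--Prasad rigidity together with the observation that the conjugating element lies in the commensurator -- is exactly the standard derivation, and the first half (existence and uniqueness of $\Gamma_{\mathrm{max}}$, via finite co-volume forcing commensurability) is correct.

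The genuine gap is precisely the point you wave at in the last sentence: the claim that a subgroup $\Gamma'<\Gamma_{\mathrm{max}}$ abstractly isomorphic to $\Gamma$ necessarily has finite index. Your justification -- ``being abstractly isomorphic to the lattice $\Gamma$, the subgroup $\Gamma'$ is itself a discrete subgroup of finite co-volume'' -- is a non-sequitur: discreteness plus abstract isomorphism type does \emph{not} by itself bound the covolume of the embedding. In $\mathrm{PO}(3,1)$ there are, in general, discrete faithful representations of lattices with infinite covolume, so one must use more: either a Poincar\'e-duality argument in the cocompact case, or (in the cusped case) tameness, the covering theorem of Thurston--Canary, and a peripheral-structure argument to rule out both geometrically finite funnel ends and virtual-fibre subgroups. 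None of this is ``bookkeeping''. In fact the phrasing of the theorem in the cited source sidesteps the issue entirely by formulating the conjugacy statement for finite-covolume groups in the commensurability class of $\Gamma$, and in the paper's application (Proposition \ref{prop:distinguish_na}) $\Gamma'$ is a lattice a priori, so the question of index never arises. You should either supply the covering-theorem argument, or -- closer to the spirit of the source and of how the theorem is actually used -- weaken the ``moreover'' clause to quantify over lattices $\Gamma'<\Gamma_{\mathrm{max}}$ isomorphic to $\Gamma$.
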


We are finally ready to prove the following

\begin{prop} \label{prop:distinguish_na}
Let $G$ be a $4$-factor and $v = \Vol(M_G)$. Then the number of $4$-factors $G'$ such that $M_{G'}$  is isometric to $M_G$ is at most $C\cdot v$, where $C > 0$ is a constant independent of $v$ and $G$.
\end{prop}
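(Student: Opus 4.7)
The key input is Margulis's Theorem \ref{thm:margulis}, which applies because each $M_{G'}$ is non-arithmetic by Remark \ref{rem:nonarithm}. Let $\Gamma := \pi_1(M_G)$, realized inside $\Gamma_{Q^3}$ via the tower of covers $M_G \to B \to P^3 \to Q^3$. Combining Fact \ref{fact:maximal} with Theorem \ref{thm:margulis}, the lattice $\Gamma_{Q^3}$ is the unique maximal lattice (up to conjugation in $\mathrm{PO}(3,1)$) containing $\Gamma$.

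First I would show: if $M_{G'}\cong M_G$, then up to conjugation in $\mathrm{PO}(3,1)$ the subgroup $\pi_1(M_{G'})$ lies inside $\Gamma_{Q^3}$ and is conjugate to $\Gamma$ within $\Gamma_{Q^3}$. Indeed, Mostow--Prasad rigidity provides $g\in\mathrm{PO}(3,1)$ with $g\pi_1(M_{G'})g^{-1}=\Gamma$; since conjugation does not alter $M_{G'}$ as an abstract manifold, we may assume $\pi_1(M_{G'})\subset\Gamma_{Q^3}$, and then Theorem \ref{thm:margulis} applies. The number of subgroups of $\Gamma_{Q^3}$ conjugate to $\Gamma$ inside $\Gamma_{Q^3}$ equals
\[
[\Gamma_{Q^3}:N_{\Gamma_{Q^3}}(\Gamma)]\leq[\Gamma_{Q^3}:\Gamma]=\frac{v}{\Vol(Q^3)},
\]
which is at most $Cv$ with $C:=1/\Vol(Q^3)$.

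Second, I would verify that each isomorphism class of 4-factor $G'$ with $M_{G'}\cong M_G$ determines a unique such subgroup $\pi_1(M_{G'})\subset\Gamma_{Q^3}$, and conversely. Given $\Gamma'\subset\Gamma_{Q^3}$, the manifold $\matH^3/\Gamma'$ comes equipped with a canonical orbifold covering to $Q^3$; pulling back the natural cell decomposition of $Q^3$ (first into copies of $P^3$ via the $\Gamma_{v'}$-action, and then grouping these into copies of the block $B$ via the colored gluings described in Section \ref{sec:blocks_na}) reproduces the block tessellation of $M_{G'}$, and the labelled graph $G'$ can then be read off up to graph isomorphism.

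The main obstacle is this reconstruction step: one has to verify that the $4$-coloring of the block boundaries is canonically induced by the covering $M_{G'}\to Q^3$, so that no further ambiguity creeps in when passing from the lifted cell structure to the 4-factor. Once this is in place, the two bounds combine to yield the asserted inequality with $C=1/\Vol(Q^3)$.
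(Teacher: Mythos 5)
Your strategy is the same as the paper's: Margulis's theorem reduces the problem to counting $\Gamma_{\mathrm{max}}$-conjugates of $\Gamma$, and these are bounded by the index $[\Gamma_{\mathrm{max}}:\Gamma]=v/\Vol(Q^3)$. The one point where the paper is cleaner is exactly the ``main obstacle'' you flag. Rather than reconstructing $G'$ by pulling back the cell decomposition of $Q^3$ and regrouping, the paper works with the inclusion $\Gamma'<\Gamma_B$ and takes the Schreier coset graph of $\Gamma'$ with respect to a presentation of $\Gamma_B$ whose order-$2$ generators correspond to the eight boundary components of $B$. The glueing graph $G'$ together with its colouring then falls out of the Schreier graph just by keeping the edges coloured by these order-$2$ generators, which sidesteps the colouring ambiguity you were worried about; and each $G'$ with $M_{G'}\cong M_G$ produces such a subgroup $\Gamma'<\Gamma_B$, which by Margulis lies among the $\Gamma_{\mathrm{max}}$-conjugates of $\Gamma$. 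One small imprecision in your write-up: since $\pi_1(M_{G'})$ already sits inside $\Gamma_B<\Gamma_{Q^3}$ by construction (Remark \ref{rem:nonarithm}), there is no need to conjugate by a Mostow--Prasad isometry and then ``assume'' $\pi_1(M_{G'})\subset\Gamma_{Q^3}$; Theorem \ref{thm:margulis} applies directly to the two subgroups $\Gamma$ and $\pi_1(M_{G'})$ of $\Gamma_{Q^3}$.
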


\begin{proof}
Since $M_G\cong\matH^3/_\Gamma$, $B\cong\matH^3/_{\Gamma_B}$, by recalling Remark \ref{rem:nonarithm} we can assume that $\Gamma<\Gamma_B$.

Now we show that the factor $G$ is determined by the inclusion $\Gamma<\Gamma_B$. Indeed, the group $\Gamma_B$ has a finite presentation $\mathcal{P}$ with a generator of order $2$ for each boundary component of $B$, and such that all the remaining generators have infinite order. Let $\bar G$ be the Schreier co-set graph of $\Gamma<\Gamma_B$ with respect to $\mathcal{P}$. Then $\bar G$ has its edges coloured by the generators from the presentation $\mathcal{P}$. The factor $G$ is readily obtained from $\bar G$ by deleting all the edges coloured by generators of infinite order.

Now, assuming that $M_G\cong M_{G'}\cong\matH^3/_{\Gamma'}$ and $\Gamma'<\Gamma_B$, we have that $\Gamma$ and $\Gamma'$ are conjugate in $\Iso(\matH^3)$. By Margulis' Theorem \ref{thm:margulis}, they are actually conjugate in $\Gamma_{\mathrm{max}}$. The number of conjugacy classes of $\Gamma$ in $\Gamma_{\mathrm{max}}$ is at most the index $[\Gamma_{\mathrm{max}}:\Gamma]=C\cdot v$, where $C^{-1} = \Vol(\matH^3/_{\Gamma_{\mathrm{max}}}) = \Vol(Q^3)$ (the last equality follows from Fact \ref{fact:maximal}, however it is not necessary).

Thus, the upper bound of $C\cdot v$ holds for the number of all the possible $\Gamma'<\Gamma_B$ with $\matH^3/_{\Gamma'}\cong\matH^3/_{\Gamma}$ and, by the previous discussion, also for the number of all possible $4$-factors $G'$ with $M_{G'}\cong M_{G}$. 
\end{proof}

Similarly to Proposition \ref{prop:distinguish_doubles_a}, we can estimate the number of graph factors producing the same $4$-dimensional double.

\begin{prop} \label{prop:distinguish_doubles_na}
Let $G$ be a $4$-factor and $v = \Vol(D(W_G))$. Then there are at most $C\cdot v^2$ $4$-factors $G'$ such that $D(W_{G'})$  is isometric to $D(W_G)$, where $C > 0$ is a constant independent of $v$ and $G$.
\end{prop}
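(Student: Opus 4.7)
The plan is to mimic the strategy used for Proposition~\ref{prop:distinguish_doubles_a}, but to replace the uniqueness of $G$ given $M_G$ (which, in the arithmetic case, came from Mostow--Prasad rigidity via Proposition~\ref{prop:distinguish_a}) by the quantitative bound provided by Proposition~\ref{prop:distinguish_na}. This replacement costs one extra factor of $v$, and produces the quadratic bound.

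First, I would recover the isometry class of $M_{G'}$ from the double. By construction, each double $D(W_{G'})$ carries a canonical orientation-reversing isometric involution $\sigma_{G'}\colon D(W_{G'})\to D(W_{G'})$ that exchanges the two copies of $W_{G'}$ and whose fixed-point set is exactly the totally geodesic hypersurface $M_{G'}=\partial W_{G'}$. If $\varphi\colon D(W_{G'})\to D(W_G)$ is an isometry, then $\varphi\circ\sigma_{G'}\circ\varphi^{-1}$ is an orientation-reversing isometric involution of $D(W_G)$ with fixed-point set isometric to $M_{G'}$. Hence the number of isometry classes of manifolds that can arise as $M_{G'}$, over all $4$-factors $G'$ with $D(W_{G'})\cong D(W_G)$, is bounded above by the number of involutions in $\Iso(D(W_G))$, and in particular by $|\Iso(D(W_G))|$. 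By the Kazhdan--Margulis theorem \cite{KM}, this cardinality satisfies
\begin{equation*}
|\Iso(D(W_G))|\leq C_1\cdot v,
\end{equation*}
for some universal constant $C_1>0$ independent of $G$.

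Next, I would count how many $4$-factors can produce a given $M_{G'}$. By Proposition~\ref{prop:bound_na} and the definition of the double, we have
\begin{equation*}
\Vol(M_{G'})=\frac{\Vol(W_{G'})}{K}=\frac{\Vol(D(W_{G'}))}{2K}=\frac{v}{2K}.
\end{equation*}
Applying Proposition~\ref{prop:distinguish_na} with this value of the volume, the number of $4$-factors $G'$ such that $M_{G'}$ is isometric to a fixed manifold is at most $C_2\cdot v/(2K)$, for a constant $C_2>0$ independent of $G$.

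Combining the two estimates, the total number of $4$-factors $G'$ with $D(W_{G'})\cong D(W_G)$ is bounded by
\begin{equation*}
(C_1\cdot v)\cdot\bigl(C_2\cdot v/(2K)\bigr) = C\cdot v^2,
\end{equation*}
with $C=C_1C_2/(2K)$, as required. The only non-routine step is the one already performed in Proposition~\ref{prop:distinguish_na}; once the doubling involution is handled by Kazhdan--Margulis, everything else is bookkeeping.
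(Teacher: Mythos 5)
Your proposal is correct and follows essentially the same two-step strategy as the paper: bound the number of isometry classes of candidate $M_{G'}$'s by $|\Iso(D(W_G))|$ via Kazhdan--Margulis, then bound the number of $4$-factors producing a given $M_{G'}$ via Proposition~\ref{prop:distinguish_na} and the volume relation from Proposition~\ref{prop:bound_na}. Your write-up is somewhat more explicit than the paper's (spelling out the conjugated involution $\varphi\circ\sigma_{G'}\circ\varphi^{-1}$ and the volume computation $\Vol(M_{G'})=v/(2K)$), but the argument and the resulting constant are the same.
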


\begin{proof}
Let us fix $D(W_{G})=\matH^4/_\Gamma$. We want to estimate the number of possible $M_{G'}$'s such that $D(W_{G'})\cong D(W_{G})$. Since there is an isometric involution of $D(W_{G})$ for each such $M_{G'}$, this number does not exceed the cardinality of $\Iso(D(W_{G}))$, which is at most $C_1\cdot v$ for some $C_1>0$, c.f. \cite{KM}.

Now, we fix such an $M_{G'}$. The number of $4$-factors $G''$ such that $M_{G''} \cong M_{G'}$ is at most $C_2\cdot\Vol(M_{G''})=C_3\cdot v$, for some constants $C_2, C_3>0$, where the estimate comes from Proposition \ref{prop:distinguish_na} and the subsequent equality holds by Proposition \ref{prop:bound_na}.

Finally, the statement follows by bringing together the above two facts and putting $C = C_1\cdot C_3$.
\end{proof}

\subsection{Conclusion and remarks} \label{sec:conclusion_na}

The proof of Theorem~\ref{thm:main} in the non-arithmetic case proceeds from now on similarly to the arithmetic one. The difference here is that on one hand we do not need to count $4$-factors up to graph isomorphism, however we need to exclude some of the $M_G$'s from the sequence.

As in Section \ref{sec:conclusion_a}, (2) follows from Remark \ref{rem:arithm}, (4) by construction, (6) from Remark \ref{rem:arithm4d}, and (7) from Proposition \ref{prop:bound_na}.
Now, by Proposition \ref{prop:distinguish_na}, the number of manifolds $M_G$ produced from $4$-factors $G$ with $\leq v/\Vol(B)$ vertices has growth type of $f(v) = v^v$, while the number of non-isomorphic $4$-factors $G^\prime$ producing manifolds $M_{G^\prime}$ isometric to $M_G$ has growth type of $g(v) = v$. However, $f(v)/g(v)$ has growth type of $v^v$ again. In particular, we have proved (1) and (3) up to taking a subsequence. Up to another subsequence, by Proposition \ref{prop:distinguish_doubles_na} also (5) holds, since $v^v/v^2$ also has growth type of $v^v$.

The proof of Theorem~\ref{thm:main} is finally completed. We conclude the paper with a series of remarks.

\begin{rem} \label{rem:surfaces}
One could apply this technique to the right-angled octahedron to show a similar result for geometrically bounding arithmetic surfaces. Moreover, by glueing right-angled polyhedra, one could easily build many explicit examples of arithmetic or non-arithmetic, compact or non-compact, geometrically bounding hyperbolic surfaces.
\end{rem}

\begin{rem} \label{rem:higher_dimension_PV}
This technique can be probably extended to dimensions $4\leq n\leq7$ by means of the Potyagailo-Vinberg right-angled polytopes \cite{PV}. However, some more work would be necessary to distinguish manifolds by distinguishing their glueing graphs in this case. Indeed, the aforementioned polytopes are arithmetic (so one cannot use Margulis' Theorem) and have some finite vertices (so one cannot use the Epstein-Penner polyhedral decomposition either).
\end{rem}

\begin{rem} \label{rem:avoid_margulis}
By glueing copies of another non-arithmetic $4$-polytope studied in \cite{KS, MR}, one can show that there is a hyperbolic $4$-manifold with right-angled corners and a facet that resembles the block $B$ of Definition \ref{def:block_a}. Namely, this facet is a hyperbolic $3$-manifold with totally geodesic boundary, and it covers an \emph{ideal} right-angled icosidodecahedron, which is non-arithmetic. With this in hand, one could prove Theorem \ref{thm:main} without appealing to Margulis' Theorem, and rather using the Epstein-Penner polyhedral decomposition again. However, such an alternative ``arithmetic-free'' proof encounters much technical difficulties: for example, one needs at least $600$ copies of the initial $4$-polytope to begin with. 
\end{rem}

\begin{rem} \label{rem:margulis_cpt}
An advantage of Margulis' Theorem, instead, is that it can be directly applied to \emph{compact} non-arithmetic manifolds. Unfortunately, we do not know any example of a non-arithmetic compact right-angled $4$-polytope (with a non-arithmetic facet). Does such a polytope exist?
\end{rem}

\end{document}